\newtheorem{teo}{Theorem}[section]
\newtheorem{pro}[teo]{Proposition}
\newtheorem{lem}[teo]{Lemma}
\newtheorem{co}[teo]{Corollary}
\theoremstyle{definition}
\newtheorem{de}[teo]{Definition}
\newtheorem{no}[teo]{Notation}
\newtheorem{re}[teo]{Remark}
\newcommand{\Q}{\mathbb{Q}}
\newcommand{\R}{\mathbb{R}}
\newcommand{\N}{\mathbb{N}}
\newcommand{\OO}{\mathcal{O}}
\newcommand{\I}{\mathcal{I}}
\theoremstyle{remark}
\newtheorem{rem}{Remark}[teo]
\newtheorem{cla}[rem]{Claim}
\title{Discrepancies of non-$\Q$-Gorenstein varieties}
\author{Stefano Urbinati}
\date{}
\begin{document}

\address{Department of Mathematics, University of Utah, 155 South 1400 East, Salt Lake City, UT 84112, USA}
\email{urbinati@math.utah.edu}

\keywords{Normal varieties, singularities of pairs, multiplier ideal sheaves}

\subjclass[2000]{14J17, 14F18, 14Q15}

\begin{abstract}
We give an example of a non $\Q$-Gorenstein variety whose canonical divisor has an irrational valuation and example of a non  $\Q$-Gorenstein variety which is canonical but not klt. We also give an example of an irrational jumping number and we prove that there are no accumulation points for the jumping numbers of normal non-$\Q$-Gorenstein varieties with isolated singularities.
\end{abstract}

\maketitle

\section{Introduction}\label{sec:intro}

The aim of this paper is to investigate some surprising features of singularities of normal varieties in the non-$\Q$-Gorenstein case as defined by T. de Fernex and C. D. Hacon (cf. \cite{MR2501423}). In that paper the authors focus on the difficulties of extending some invariants of singularities in the case that the canonical divisor is not $\Q$-Cartier. Instead of the classical approach where we modify the canonical divisor by adding a boundary, an effective $\Q$-divisor $\Delta$ such that $K_X + \Delta$ is $\Q$-Cartier, they introduce a notion of $pullback$ of (Weil) $\Q$-divisors which agrees with the usual one for $\Q$-Cartier $\Q$-divisors. In this way, for any birational morphism of normal varieties $f: Y \to X$, they are able to define relative canonical divisors $K_{Y/X}= K_Y +f^*(-K_X)$ and $K^-_{Y/X}= K_Y -f^*(K_X)$. The two definitions coincide when $K_X$ is $\Q$-Cartier and using $K_{Y/X}$ and $K_{Y/X}^-$ de Fernex and Hacon extended the definitions of canonical singularities, klt singularities and multiplier ideal sheaves to this more general context. 

In this setting some of the properties characterizing the usual notions of singularity (see \cite[Section 2.3]{MR1658959}) seem to fail due to the asymptotic nature of the definitions of the canonical divisors.

We focus on three properties that for $\Q$-Gorenstein varieties are straightforward:
\begin{itemize}
\item The relative canonical divisor always has rational valuations (cf. \cite[Theorem 92]{oai:arXiv.org:0809.2579}).

\item A canonical variety is always kawamata log terminal (cf. \cite[Definition 2.34]{MR1658959}).

\item The jumping numbers are a set of rational numbers that have no accumulation points (cf. \cite[Lemma 9.3.21]{MR2095472}).

\end{itemize}
In this article we investigate these properties for non-$\Q$-Gorenstein varieties.

In the third section, we show that if $X$ is klt in the sense of \cite{MR2501423}, then the relative canonical divisor has rational valuations and we give an example of a (non klt) variety $X$ with an irrational valuation and we use it to find an irrational jumping number (Theorem \ref{irrjump}).

In the fourth section we give an example of a variety with canonical but not klt singularities (Theorem \ref{canonklt}) and we prove that the finite generation of the canonical ring implies that the relative canonical model has canonical singularities (Proposition \ref{fgcr}). 

In the last section, using one of the main results in \cite{MR2501423}, namely that every effective pair $(X,Z)$ admits $m$-compatible boundaries for $m\geq 2$ (see Theorem \ref{teoTC} below), we show that for a normal variety whose singularities are either klt or isolated, it is never possible to have accumulation points for the jumping numbers (Theorem \ref{teojn}).

\bigskip

{\small\noindent {\bf Acknowledgements.} } The author would like to thank his supervisor C. D. Hacon for his support and the several helpful discussions and suggestions. The author would also like to thank E. Macr\`i and T. de Fernex for many useful comments. 

The author is grateful to the referee for many valuable suggestions.

\section{Basic definitions}\label{sec:basic}

 The following notations and definitions are taken from \cite{MR2501423}.

\begin{no} \label{noX} Throughout this paper $X$ will be a normal variety over the complex numbers.
\end{no}

Let us denote by $v = \mbox{val}_F$ a divisorial valuation on $X$ with respect to the prime divisor $F$ over $X$. Given a proper closed subscheme $Z \subset X$ we define $v(Z)$ as 
$$v(Z)= v(\I_Z):= \min\{v(\phi)| \phi \in \I_Z(U), U\cap c_X(v)\ne \emptyset \}$$
where $\I_Z \subset \OO_Z$ is the ideal sheaf of $Z$. The definition extends to $\R$-linear combinations of proper closed subschemes. The same definition works in a natural way for linear combinations of fractional ideal sheaves.

To any fractional ideal sheaf $\I$ on $X$, we associate the divisor $$\text{div}(\I) := \sum_{E \subset X} \text{val}_E(\I)\cdot E$$
where the sum is over all prime divisors $E$ on $X$ and val$_E$ denotes the divisorial valuation with respect to $E$.

\begin{de} Let $X$ be as in Notation \ref{noX}. The $\natural$-valuation (or natural valuation) along a valuation $v$ of a divisor $F$ on $X$ is 
$$v^{\natural}(F):= v(\OO_X(-F)).$$
\label{val} Let $D$ be a $\Q$-divisor on $X$. The valuation along $v$ of $D$ is 
$$v(D):= \lim_{k\to \infty} \frac{v^{\natural}(k!D)}{k!}= \inf_{k\geq 1} \frac{v^{\natural}(kD)}{k} \, \in  \, \R.$$

\end{de}

\begin{no} \label{noY} Let $X$ be as in Notation \ref{noX}. Let us consider a projective birational morphism $f: Y \to X$ from a normal variety $Y$. 
\end{no}
We have the following definitions:

\begin{de} \label{pullback} Using notation \ref{noY}, for any divisor $D$ on $X$, the $\natural$-pullback of $D$ to $Y$ is defined to be 
$$f^{\natural}D=\text{div}(\OO_X(-D)\cdot \OO_Y).$$
This is the natural choice to obtain a reflexive sheaf,  $\OO_Y(-f^{\natural}D)= (\OO_X(-D)\cdot \OO_Y)^{\vee \vee}$.

We also need a good definition of $pullback$ of $D$ to $Y$, that needs to coincide with the classical one when we restrict to non-singular varieties. We have:
$$f^*D:= \sum \text{val}_E(D)\cdot E,$$
where the sum is taken over all the prime divisors $E$ on $Y$.

\end{de}

We now give the main definitions that characterize multiplier ideal sheaves. 

\begin{de} Let $f: Y \to X$ be as in Notation \ref{noY}, for every $m \geq 1$, the $m$-th limiting relative canonical $\Q$-divisor $K_{m, Y/X}$ of $Y$ over $X$ is
$$K_{m,Y/X}:= K_Y - \frac{1}{m}\cdot f^{\natural}(mK_X).$$
The relative canonical $\R$-divisor $K_{Y/X}$ of $Y$ over $X$ is 
$$K_{Y/X}:= K_Y +  f^*(-K_X)$$
\end{de}

In particular $K_{m, Y/X}\leq K_{mq,Y/X}\leq K_{Y/X}$. Also, taking the limsup of the coefficients of the components of the $\Q$-divisor $K_{m, Y/X}$, one obtains the $\R$-divisor $K_{Y/X}^{-}:= K_Y - f^*K_X$ which satisfies $K_{Y/X}^{-} \leq K_{Y/X}$ (the two divisors coincide if $X$ is $\Q$-Gorenstein i.e. if $K_X$ is $\Q$-Cartier).
\vskip .3cm
Recall that an effective $\Q$-divisor $\Delta$ is a boundary on $X$ if $K_X + \Delta$ is a $\Q$-Cartier $\Q$-divisor.
\begin{de} Let $f: Y \to X$ as in Notation \ref{noY}, let $\Delta$ be a boundary on $X$ such that $K_X+ \Delta$ is $\Q$-Cartier, and let $\Delta_Y$ be the proper transform of $\Delta$ on $Y$. The log relative canonical $\Q$-divisor of $(Y, \Delta_Y)$ over $(X, \Delta)$ is given by:
$$K_{Y/X}^{\Delta}:= K_Y + \Delta_Y - f^*(K_X + \Delta)= K_Y + \Delta_Y + f^*(-K_X - \Delta).$$
\end{de}

In particular, for every boundary $\Delta$ on $X$ and every $m\geq 1$ such that $m(K_X + \Delta)$ is Cartier, we have 
$$K_{m, Y/X}= K_{Y/X}^{\Delta} - \frac{1}{m}\cdot f^{\natural}(-m\Delta) - \Delta_Y  \quad \text{and} \quad K_{Y/X}= K_{Y/X}^{\Delta} + f^*\Delta - \Delta_Y.$$
Note that $K_{Y/X}^{\Delta} \leq K_{m, Y/X} \leq K^-_{Y/X}$.
\begin{de} \label{pair}
Consider a pair $(X,I)$ where $X$ is a normal quasi-projective variety and $I= \sum a_k \I_k$ is a formal $\R$-linear combination of non-zero fractional ideal sheaves on $X$.  Let us denote by $Z=\sum a_k Z_k$ the associated subscheme, where $Z_k$ is the subscheme generated by $\I_k$.

We define a \emph{log resolution} of this pair as a proper birational morphism $f:Y \to X$, where $Y$ is a smooth variety, such that for every $k$:
\begin{itemize} 
\item The sheaf $\I_k\cdot\OO_Y$ is an invertible sheaf corresponding to a divisor $E_k$ on $Y$.
\item The exceptional locus Ex$(f)$ is a divisor.
\item The union of the supports of $E_k$ and Ex$(f)$ is simple normal crossing. 
\end{itemize}

If $\Delta$ is a boundary on $X$, then a log resolution for $((X,\Delta);I)$ is given by a resolution of $(X,I)$ such that Ex$(f)$,  $E$, Supp$f^*(K_X+\Delta)$ are divisors and their union Ex$(f)\cup E \cup$ Supp$f^*(K_X+\Delta)$ has simple normal crossings. 
\end{de}

\begin{de} Let $(X,Z)$ be as in Definition \ref{pair}. Let $f:Y \to X $ be a log resolution with $Y$ normal, and let $F$ denote a prime divisor on $Y$. For any integer $m \geq 1$, we define the $m$-th limiting log discrepancy of $(X,Z)$ along $F$ to be 
$$a_{m,F}(X;Z):=\text{ord}_F(K_{m, Y/X}) +1 -\text{val}_F(Z).$$
\end{de}

\begin{de} \label{dLTklt} Using the notation above, the pair $(X,Z)$ is said to be \emph{log terminal} if there is an integer $m_0$ such that $a_{m_0,F}(X,Z)>0$ for every prime divisor $F$ over $X$. 

We say that an effective pair is $klt$ if and only if there exists a boundary $\Delta$ such that $((X, \Delta); Z)$ is $klt$ (kawamata log terminal) in the usual sense.
\end{de}

 In particular the notions of log terminal and klt are equivalent because of the following (cf. \cite[Thm 5.4]{MR2501423}):
\begin{teo} \label{teoTC}Every effective pair $(X, Z)$ admits $m$-compatible boundaries for $m\geq 2$, where, a boundary $\Delta$ is said to be $m$-compatible if:
\begin{enumerate}[i)]
\item  $m \Delta$ is integral and $\lfloor \Delta \rfloor=0$.
\item No component of $\Delta$ is contained in the support of $Z$.
\item $f$ is a log resolution for the log pair $((X,\Delta); Z + \OO_X(-mK_X))$.
\item $K_{Y/X}^{\Delta}=K_{m, Y/X}$.
\end{enumerate} 
\end{teo}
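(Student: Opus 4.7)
The plan is to construct the boundary as $\Delta := \tfrac{1}{m}M$ where $M$ is the zero divisor of a very general section of a sufficiently positive twist of $\OO_X(-mK_X)$, and then verify the four conditions using Bertini-type arguments together with the identity
\[
K_{m, Y/X}= K_{Y/X}^{\Delta} - \tfrac{1}{m}\cdot f^{\natural}(-m\Delta) - \Delta_Y
\]
recorded immediately before the theorem.

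\textbf{Construction.} First I would fix a log resolution $f\colon Y \to X$ of the pair $(X, Z + \OO_X(-mK_X))$, so that $\OO_X(-mK_X)\cdot\OO_Y = \OO_Y(-G)$ with $G = f^{\natural}(mK_X)$ Cartier on $Y$ and $K_{m, Y/X}= K_Y - \tfrac{1}{m}G$. Next, choose a very ample Cartier divisor $A$ on $X$ large enough that $\OO_X(-mK_X)\otimes \OO_X(A)\cong \OO_X(-mK_X + A)$ is globally generated, and pick a very general section $s$ of this reflexive sheaf. Set $M := \mathrm{div}(s)$ (effective Weil) and $\Delta := \tfrac{1}{m}M$. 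Because $mK_X + M \sim A$ and $A$ is Cartier, $\OO_X(mK_X+M)\cong \OO_X(A)$ is invertible, so $m(K_X+\Delta)$ is Cartier and $\Delta$ is a genuine boundary.

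\textbf{Conditions (i)--(iii).} Since $\OO_X(-mK_X+A)$ is globally generated, a Bertini-type argument applied to a very general $s$ gives that $M$ is reduced, has no component in $\mathrm{supp}(Z)$, and that its strict transform $M_Y$ on $Y$ is smooth and meets $\mathrm{Ex}(f)\cup\mathrm{supp}\bigl((Z + \OO_X(-mK_X))\cdot\OO_Y\bigr)$ transversally. Combined with $m\geq 2$, this yields (i) (here $m\Delta = M$ is integral and $\lfloor\Delta\rfloor = 0$), (ii) (no component of $\Delta$ lies in $\mathrm{supp}(Z)$), and (iii) ($f$ remains a log resolution for $((X,\Delta); Z+\OO_X(-mK_X))$). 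If necessary, a further blow-up compatible with the exceptional structure fixes (iii) without disturbing the quantities needed for (iv).

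\textbf{Condition (iv) -- the main obstacle.} Using the displayed identity above, (iv) reduces to proving
\[
f^{\natural}(-M) \;=\; -M_Y,\qquad\text{equivalently}\qquad \OO_X(M)\cdot\OO_Y \;=\; \OO_Y(M_Y).
\]
This is the technical heart of the argument: $M$ is typically \emph{not} Cartier (only $mK_X+M$ is), so ordinary Cartier pullback is unavailable for $M$ alone. The trick is to exploit the factorization $\OO_X(-mK_X+A) = \OO_X(-mK_X)\otimes\OO_X(A)$. Global generation of the left-hand side ensures that for very general $s$, its image in $\OO_X(-mK_X)\cdot\OO_Y = \OO_Y(-G)$ generates this invertible sheaf along every exceptional divisor of $f$; hence $f^{*}(mK_X + M)$, which does exist as a Cartier divisor since $mK_X+M$ is Cartier, decomposes as $G + M_Y$, with the fixed part $G$ captured by the fractional-ideal contribution of $\OO_X(-mK_X)$ and the moving part $M_Y$ coming from the smooth strict transform. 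Dividing by $m$ gives $f^{*}(K_X+\Delta) = \tfrac{G}{m} + \Delta_Y$ and hence $K^{\Delta}_{Y/X} = K_Y - \tfrac{G}{m} = K_{m, Y/X}$, as required. The principal difficulty is precisely this ``no extraneous exceptional contribution'' step, and it is arranged by choosing $A$ ample enough and $s$ generic enough within a globally generated system.
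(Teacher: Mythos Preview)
The paper does not actually prove this theorem: it is quoted from de~Fernex--Hacon \cite[Thm~5.4]{MR2501423} and used as a black box throughout. So there is no ``paper's own proof'' to compare against. That said, your construction is exactly the de~Fernex--Hacon one, and a close variant of it (replacing $-mK_X$ by $-mD$ for an effective $D$ with $K_X-D$ Cartier, then twisting by an ample $\mathscr{A}^{\otimes m}$) is in fact reproduced later in this paper inside the proof of Theorem~\ref{teojn}, where the author writes ``We follow the proof of \cite[Theorem 5.4]{MR2501423}'' and sets $\Delta_m=\tfrac1m M$ for a general $M\in|\mathscr{A}^{\otimes m}-mD|$.

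Your argument is correct. The cleanest way to see (iv) is the one you give at the end: since $\OO_X(-mK_X)\cdot\OO_Y=\OO_Y(-G)$ is invertible and $\OO_X(-mK_X+A)$ is globally generated, the induced linear subsystem of $|{-G+f^*A}|$ on the smooth variety $Y$ is base-point-free; Bertini then gives that the zero divisor of a general section on $Y$ is exactly the smooth strict transform $M_Y$, whence $f^*(mK_X+M)=G+M_Y$ and $K^{\Delta}_{Y/X}=K_Y-\tfrac1m G=K_{m,Y/X}$. This simultaneously shows $M$ is reduced (so $\lfloor\Delta\rfloor=0$ for $m\geq 2$), avoids the support of $Z$, and keeps the SNC condition, so your hedge ``if necessary, a further blow-up \dots'' is not needed.
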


\begin{no} \label{LTklt} Because of the previous Theorem and the genaral notation in the literature, now on we will abuse our notation saying that a normal variety $X$ is klt whenever it is log terminal according to Definition \ref{dLTklt}. A pair $(X, \Delta)$ will be klt in the usual sense.

\end{no}

\begin{re} \label{boundary} Since $K_{Y/X}^{\Delta} \leq K^-_{Y/X}$, by Theorem \ref{teoTC} and Definition $\ref{val}$
$$\mbox{val}_F(K_{Y/X}^-)= \sup \{\mbox{ord}_F(K_{Y/X}^{\Delta})| (X, \Delta) \mbox{ is a log pair}\}.$$ 
Note that we are considering a limit and hence we may have irrational valuations.
\end{re}
\begin{de} \label{can} Let $X$ be an in Notation \ref{noX}. Let $X' \to X$ be a proper birational morphism with $X'$ normal, and let $F$ be a prime divisor on $X'$. The \emph{log-discrepancy} of a prime divisor $F$ over $X$ with respect to $(X,Z)$ is 
$$a_F(X,Z):= \mbox{ord}_F(K_{X'/X}) + 1 - \mbox{val}_F(Z).$$
Using the notation in Definition \ref{pair}, the pair $(X,Z)$ is said to be \emph{canonical} (resp. \emph{terminal}) if $a_F(X, Z)\geq 1$ (resp. $>1$) for every exceptional prime divisor $F$ over $X$.
\end{de}

Recall that by \cite[Proposition 8.2]{MR2501423}, a normal variety $X$ is canonical if and only if for sufficiently divisible $m\geq 1$, and for every sufficiently high log resolution $f:Y \to X$ of $(X, \OO_X(mK_X))$, there is an inclusion 
$$ \OO_Y\cdot \OO_X(mK_X) \hookrightarrow \OO_Y(m K_Y).$$
We have the following useful lemma:
\begin{lem} Using Notation \ref{noX}, \label{can1} let $f:Y\to X$ be a proper birational morphism such that $Y$ is canonical. If $\OO_Y\cdot \OO_X(mK_X) \hookrightarrow \OO_Y(m K_Y)$ for sufficiently divisible $m\geq 1$, then $X$ is canonical.
\end{lem}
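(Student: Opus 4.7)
The plan is to reduce the statement to the characterization of canonical singularities recalled immediately before the lemma (i.e.\ \cite[Proposition 8.2]{MR2501423}), used in the forward direction for $Y$ and in the reverse direction for $X$. Since $Y$ is canonical, the characterization says that for all sufficiently divisible $m \geq 1$ and all sufficiently high log resolutions $h: W \to Y$ of $(Y, \OO_Y\cdot \OO_X(mK_X))$, there is an inclusion $\OO_W \cdot \OO_Y(mK_Y) \hookrightarrow \OO_W(mK_W)$. The hypothesis of the lemma provides, for sufficiently divisible $m$, the inclusion $\OO_Y \cdot \OO_X(mK_X) \hookrightarrow \OO_Y(mK_Y)$. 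Choosing a single $m$ sufficiently divisible for both statements and pushing the hypothesis up to $W$ will produce the chain
\[
\OO_W \cdot \OO_X(mK_X) \;\hookrightarrow\; \OO_W \cdot \OO_Y(mK_Y) \;\hookrightarrow\; \OO_W(mK_W),
\]
which, read backwards through the characterization, yields that $X$ is canonical.

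Concretely, first I would fix such an $m$. Then I would construct $h: W \to Y$ as a log resolution of $(Y, \OO_Y\cdot \OO_X(mK_X))$ with $W$ smooth, taken high enough that $g := f \circ h: W \to X$ is a log resolution of $(X, \OO_X(mK_X))$ and also high enough on $Y$ for the canonical characterization of $Y$ to apply. Such a $W$ exists because any log resolution can be dominated by a higher one, and any sufficiently high log resolution of $X$ can be further dominated by one factoring through $Y$. The inclusion $\OO_Y \cdot \OO_X(mK_X) \hookrightarrow \OO_Y(mK_Y)$ sits inside a common ambient of rational sections on $Y$, so the operation $\mathcal{J} \mapsto \OO_W \cdot \mathcal{J}$ preserves it inside the analogous ambient on $W$; composing with the inclusion coming from $Y$ being canonical yields the displayed chain.

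The main obstacle is purely a bookkeeping one, not a conceptual one: one must select a single value of $m$ that satisfies \emph{both} the hypothesis on $f: Y \to X$ and the canonical characterization on $Y$, and arrange a log resolution $g: W \to X$ that is simultaneously sufficiently high for the canonical characterization of $X$ and factors through a sufficiently high log resolution of $Y$. Once these divisibility and domination conditions are harmonized, the argument reduces to concatenating the two inclusions and invoking the characterization of canonical singularities in the reverse direction to conclude that $X$ is canonical.
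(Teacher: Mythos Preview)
Your proposal is correct and follows essentially the same approach as the paper: take a log resolution of $(X,\OO_X(mK_X))$ that factors through $Y$, then concatenate the inclusion from the hypothesis with the inclusion coming from $Y$ being canonical to obtain $\OO_W\cdot\OO_X(mK_X)\hookrightarrow\OO_W\cdot\OO_Y(mK_Y)\hookrightarrow\OO_W(mK_W)$ and conclude via the characterization. The paper's version is more terse about the divisibility and domination bookkeeping you spell out, but the argument is the same.
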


\begin{proof} Let $g:Y' \to X$  be a log resolution of  $(X, \OO_X(mK_X))$. Without loss of generality we can assume that $g$ factors through $f$, so that we have $h: Y' \to Y$ with $g= f \circ h$. In particular:
$$\OO_{Y'}\cdot \OO_X(mK_X) \hookrightarrow \OO_{Y'}\cdot \OO_Y(mK_Y)\hookrightarrow \OO_{Y'}(mK'_Y),$$
where the first inclusion is given by assumption and the second by $Y$ being canonical.
\end{proof}

\section{Irrational valuations}
Given a Weil $\R$-divisor $D$ on a normal variety $X$, we define the corresponding divisorial ring as $$\mathscr{R}_X(D):= \bigoplus_{m\geq 0} \OO_X (mD).$$ 
\begin{re} \label{fgcr} If $X$ is klt as in Notation \ref{LTklt}, then there exists $\Delta$ such that $(X, \Delta)$ is klt (in the usual sense) (cf. Theorem \ref{teoTC}). By \cite[Thm 92]{oai:arXiv.org:0809.2579} $\mathscr{R}_X(D)$ is finitely generated if and only if $D$ is a $\Q$-divisor.
\end{re}

\begin{pro} If a normal variety $X$ is klt, then for any prime divisor $F$ over $X$, the valuation $\mbox{val}_F(K^-_{Y/X})$ is rational, where $f:Y \to X$ is a projective birational morphism such that $F$ is a divisor on $Y$. 
\end{pro}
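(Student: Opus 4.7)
The strategy is to reduce the statement to showing $\mathrm{val}_F(K_X) \in \Q$ (in the sense of Definition \ref{val}) and then extract rationality from finite generation of a divisorial ring. Writing $K^-_{Y/X} = K_Y - f^*K_X$ and recalling that $f^*K_X = \sum_E \mathrm{val}_E(K_X)\cdot E$, the coefficient of $F$ in $K^-_{Y/X}$ equals $\mathrm{ord}_F(K_Y) - \mathrm{val}_F(K_X)$. Since $K_Y$ is an honest Weil divisor on the normal variety $Y$, $\mathrm{ord}_F(K_Y) \in \Z$, so the proposition is equivalent to proving $\mathrm{val}_F(K_X) \in \Q$.

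By Definition \ref{val}, $\mathrm{val}_F(K_X) = \inf_{k \geq 1} v_F^{\natural}(kK_X)/k$ with $v_F^{\natural}(kK_X) = v_F(\OO_X(-kK_X))$. Because $X$ is klt, Remark \ref{fgcr} guarantees that the divisorial ring
$$R := \mathscr{R}_X(-K_X) = \bigoplus_{m \geq 0} \OO_X(-mK_X)$$
is a finitely generated $\OO_X$-algebra. Passing to a Veronese, I would choose $m_0 \geq 1$ such that, on an affine open neighborhood of the center $c_X(v_F)$, the $m_0$-th Veronese subring $R^{(m_0)}$ is generated in degree one by finitely many sections $\phi_1, \dots, \phi_n \in \OO_X(-m_0 K_X)$.

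With this setup, every local section of $\OO_X(-km_0 K_X)$ near $c_X(v_F)$ is an $\OO_X$-linear combination of degree-$k$ monomials in the $\phi_i$. Using that $v_F$ is a valuation, so $v_F(\phi\psi) = v_F(\phi) + v_F(\psi)$ and $v_F(\phi + \psi) \geq \min(v_F(\phi), v_F(\psi))$, one gets
$$v_F(\OO_X(-km_0 K_X)) \geq k \min_i v_F(\phi_i),$$
with the reverse inequality obtained by evaluating on $\phi_{i_0}^k$ for $i_0$ realising the minimum. Hence $v_F(\OO_X(-km_0 K_X))/(km_0) = m_0^{-1}\min_i v_F(\phi_i)$ is constant in $k$ and rational, and this common value is $\mathrm{val}_F(K_X)$.

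The substantive input is the klt hypothesis, invoked through Remark \ref{fgcr} to secure finite generation of $\mathscr{R}_X(-K_X)$; everything after that is a formal unpacking of how divisorial valuations behave on a finitely generated graded ring. The main care required is the Veronese reduction and the passage from sections to monomials, which is harmless provided one works locally near the center of $v_F$ so that the $\inf$ defining $v_F(\I)$ is computed on genuine sections.
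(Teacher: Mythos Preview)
Your proof is correct and follows essentially the same route as the paper: both invoke Remark~\ref{fgcr} to get finite generation of the relevant divisorial ring, pass to a Veronese generated in degree one, and conclude that the limiting valuation is already computed at level $m_0$. The paper packages the last step as the single assertion $K^-_{Y/X}=K_{m_0,Y/X}$, whereas you spell out the underlying valuation inequality explicitly; the content is the same.
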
 

\begin{proof} 
By Remark \ref{fgcr} and \cite[Lemma 2.1.6]{MR0217084}  we know that $\mathscr{R}_X(m_0 K_X)$ is generated by $\mathcal{O}_X(m_0 K_X)$ over $\mathcal{O}_X$ for some $m_0 >0$. It follows that $K^-_{Y/X}= K_{m_0, Y/X}$ and hence $\mbox{val}_F(K^-_{Y/X})= \mbox{val}_F(K_{m_0, Y/X})$ wich is a rational number (Remark \ref{boundary}).
\end{proof}

\vskip .3cm
Next we will construct an example of a threefold whose relative canonical divisor $K^-_{Y/X}$  has an irrational valuation. The example is given by the resolution of a cone singularity over an abelian surface. 

\vskip .3cm

Let us consider the abelian surface $X= E\times E$ where $E$ is an elliptic curve.
For this surface we have that $\overline{NE}(X)= \text{Nef}(X)\subset \text{N}^1(X)$, where $\mbox{N}^1(X)$ is generated by the classes $$f_1=[\{P\}\times E], \quad f_2=[E\times \{P\}], \quad \delta =[\Delta].$$
The intersection numbers are given by:
$$((f_1)^2)=((f_2)^2)=(\delta^2)=0 \quad \mbox{and} \quad (f_1.f_2)=(f_1.\delta)=(f_2.\delta)=1.$$
Given a class $\alpha = x f_1 + yf_2 + z\delta$ then $\alpha$ is nef if and only if $$xy + xz + yz \geq 0, \quad x+y+z \geq 0$$
and we obtain that $\text{Nef}(X)$ is a circular cone (cf. \cite[Ch II, Ex 4.16]{MR1440180}).

Next, we consider a double covering of this surface ramified over a general very ample divisor $H \in |2 \mathscr{L}|$ where $\mathscr{L}$ is an ample line bundle. This cover is given by $W = \text{Spec}_X(\OO_X \oplus \mathscr{L}^{^{\vee}}) $ with projection $p: W \to X$ induced by the inclusion $i: \OO_X \hookrightarrow \OO_X \oplus \mathscr{L}^{^{\vee}}$. In particular $$\omega_W= p^*(\omega_X \otimes \mathscr{L}).$$

There is an induced involution $\sigma: W \to W$. For any Cartier divisor $D$ on $W$,  $D + \sigma^*(D)$ is the pullback of a Cartier divisor on $X$. Since $H \in |2 \mathscr{L}|$ is general, the pullbacks of the generators $p^*f_i$ and $p^*\delta$ are irreducible curves on $W$. Since the map is finite, the pullback of an ample divisor (resp. nef, effective) on $X$ is ample (resp. nef, effective) on $W$.

It is easy to see that the map induced at the level of cones $p^*: \text{NE}(X)\to \text{NE}(W)$ is well defined, injective and
 
$$p^*\overline{\text{NE}}(X)= \overline{\text{NE}}(W)\cap p^*\mbox{N}^1(X).$$
Let us now consider any ample divisor $L$ on $X$ such that $p^*L$ defines an embedding $W \subset \mathbb{P}^n$. Let $C \subset \mathbb{P}^{n+1}$ be the projective cone over $W$. We want to investigate the properties of the relative canonical divisor. 

\begin{teo} \label{irrval} With the above construction, if $H \sim 6(f_1 +f_2)$, $L \sim (3f_1 +6f_2 +6\delta)$ and $f:Y \to C$ the blow up of the cone at the vertex, then the relative canonical divisor $K^-_{Y/X}$  has an irrational valuation.

\begin{proof} 
$p^*L$ defines an embedding, in fact $$p_*p^*L \cong  L \otimes (\OO_X \oplus \mathscr{L}^{\vee} )= L \oplus (L \otimes \mathscr{L}^{\vee}) \sim (3 f_1 + 6f_2 + 6 \delta) \oplus (3f_2 + 6\delta)$$ is a sum of very ample divisors. 
Since $f: Y \to C$ be the blow-up at the vertex, then $Y$ is isomorphic to the projective space bundle $\mathbb{P}(\OO_W\oplus \OO_W(p^*L))$, with the natural projection $\pi:Y \to W$. If we denote by $W_0$ the negative section we have $\OO_{W_0}(W_0) \cong \OO_W(-p^*L)$. Let us also denote by $W_{\infty}\sim W_0 + \pi^*p^*L$ the section at infinity. The canonical divisor $K_Y$ is given by $K_Y \sim \pi^*K_W - 2W_0 + \pi^*(-p^*L)$.

\begin{rem} Recall that we have an isomorphism $\text{Cl}W \cong \text{Cl}C$ defined by the map that associates to a divisor $D \subset W$ the cone over $D$,  $C_D \subset C$. A divisor $C_D$ is $\R$-Cartier if and only if $D \sim_{\R}k p^*L$, $k \in \R$. 
\end{rem}

We have that $K_C = f_*K_Y= C_{K_W} -C_{(p^*L)}$ and $C_{k(p^*L)}$ is an $\R$-Cartier divisor on $C$ such that $f^*(C_{k(p^*L)})= \pi^*(k(p^*L)) + kW_0$. Let $\Gamma$ be a boundary on $C$, then $\Gamma \equiv C_{\Delta}$ and since $K_C + \Gamma$ is $\Q$-Cartier we have that $K_C + C_{\Delta} = C_{K_W} - C_{(p^*L)} + C_{\Delta} \equiv C_{k(p^*L)}$ for some $k\in \Q$. In particular, given $s= k+1$, we have $s(p^*L) - K_W \equiv \Delta \geq 0$. So that
$$\Delta \equiv s(p^*L) - K_W \equiv s(p^*L) - \frac{1}{2}p^*H \equiv   p^*\left(s(3f_1 +6f_2 +6\delta) - \frac{1}{2}(6f_1 + 6f_2)\right).$$

By Remark \ref{boundary}, we have that:
$$\mbox{val}_{W_0}(K^-_{Y/C})= \sup \{\mbox{ord}_{W_0}(K_{Y/C}^{\Gamma})| (C, \Gamma) \mbox{ log pair}\}\geq \sup \{\mbox{ord}_{W_0}(K_{Y/C}^{C_{\Delta}})| (C, C_{\Delta}) \mbox{ log pair}\}.$$
Note that

$$\begin{aligned}
K_{Y/C}^{\Gamma} & \equiv  K_Y + f^{-1}_*\Gamma -f^*(K_{C}+\Gamma)   \equiv\\
&\equiv K_Y + f^{-1}_*\Gamma - f^*(C_{K_W} -C_{(p^*L)} + \Gamma) \equiv \\
&\equiv  \pi^*K_W -2W_0 + \pi^*(-p^*L) +f^{-1}_*\Gamma -\pi^*(K_{W} -p^*L + \Delta) - (s-1)W_0 \equiv \\
&\equiv  -(s+1)W_0 + f^{-1}_*\Gamma -\pi^*\Delta . 	
\end{aligned}$$

In particular $K_{Y/C}^{C_{\Delta}}= -(s +1)W_0$. Therefore if we let $t=\text{inf}\{s\in \R| \exists\, \Delta \geq 0, \; K_W + \Delta \equiv s(p^*L)\},$ then we have that
$$\mbox{val}_{W_0}(K^-_{Y/C})\geq -(1+t).$$

\begin{rem} \label{ampleb} Note that $\Delta$ is ample if $s>t$, in particular it is always possible to choose $\Delta= A/m$, with $A$ a smooth very ample Cartier divisor.
\end{rem}

\begin{cla}
 ${\rm val}_{W_0}(K^-_{Y/C})=-(1+t)$
\end{cla}
\begin{proof}  Let us consider any effective boundary $\Gamma \geq 0$. It sufficies to show that, in the previous construction, it is always possible to choose a boundary $\Delta \equiv s(p^*L) - K_W \subseteq W$ such that  $\mbox{ord}_{W_0}K_{Y/C}^{\Gamma} = \mbox{ord}_{W_0}K_{Y/C}^{C_{\Delta}}$. If  $f^*(K_C + \Gamma)= K_Y + f^{-1}_*\Gamma + kW_0$, let $\Delta = f_*^{-1}\Gamma|_{W_0} \geq 0$. Note that
$$\Delta =f_*^{-1}\Gamma|_{W_0}  \equiv -(K_Y + kW_0)|_{W_0} \equiv - K_W + (k-1) p^*L.$$
By what we have seen above (with $s = k-1$),  $ K_{Y/C}^{C_{\Delta}} = -k W_0$. Hence $\mbox{ord}_{W_0}K_{Y/C}^{\Gamma} = \mbox{ord}_{W_0}K_{Y/C}^{C_{\Delta}}$.
\end{proof}

We now return to the proof of Theorem \ref{irrval}.

Since $p^*\overline{\text{NE}}(X)= \overline{\text{NE}}(W)\cap p^*\mbox{N}^1(X)$ and $\Delta \geq 0$, the sum of the coefficients of $p^*(f_1)$, $p^*(f_2)$ and $p^*\delta$ has to  be positive, so that $s\geq \frac{2}{5}$. Again, because of the above isomorphism of cones, we have that $\Delta$ is effective if and only if it is nef:
 $$(\Delta^2)/4 = 9(8s^2 -7s + 1) \geq 0 \quad \Leftrightarrow \quad s \geq \frac{7 + \sqrt{17}}{16}\;\left(>\frac{2}{5}\right)$$
and we obtain an irrational valuation of the relative canonical divisor:
$$\mbox{val}_{W_0}(K^-_{Y/C})= - \frac{23 + \sqrt{17}}{16}.$$ 
\end{proof}
\end{teo}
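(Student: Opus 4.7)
The plan is to reduce $\mbox{val}_{W_0}(K^-_{Y/C})$ to a numerical question on the abelian surface $X=E\times E$ and read off an irrational answer from the known circular nef cone. First I would unpack the geometry: $Y=\mathbb{P}(\mathcal{O}_W\oplus\mathcal{O}_W(p^*L))$ with projection $\pi\colon Y\to W$, the exceptional divisor of $f$ is the negative section $W_0\cong W$ with $\mathcal{O}_{W_0}(W_0)\cong \mathcal{O}_W(-p^*L)$, and $K_Y\sim \pi^*K_W-2W_0-\pi^*p^*L$. The double cover formula gives $K_W=p^*\mathscr{L}$ with $\mathscr{L}\sim 3(f_1+f_2)$.

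Next I would parametrize boundaries on $C$. Under the isomorphism $\mbox{Cl}(W)\cong\mbox{Cl}(C)$, $D\mapsto C_D$, a class $C_D$ is $\Q$-Cartier exactly when $D$ is numerically a rational multiple of $p^*L$, and in that case $f^*C_{kp^*L}=\pi^*(kp^*L)+kW_0$. Since $K_C=C_{K_W-p^*L}$, any boundary $\Gamma$ on $C$ satisfies $\Gamma\equiv C_\Delta$ with $\Delta\equiv s\,p^*L-K_W$ for some $s\in\Q$, and effectivity of $\Gamma$ amounts to effectivity of $\Delta$ on $W$. A direct computation yields $K_{Y/C}^\Gamma\equiv -(s+1)W_0+f_*^{-1}\Gamma-\pi^*\Delta$, so $\mbox{ord}_{W_0}K^\Gamma_{Y/C}=-(s+1)$. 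Combining with Remark \ref{boundary} gives
$$\mbox{val}_{W_0}(K^-_{Y/C})=-(1+t),\qquad t:=\inf\{s\in\R\mid s\,p^*L-K_W\text{ is effective on }W\},$$
provided I can show the supremum is really controlled by cone boundaries. The natural device is to take $\Delta_\Gamma:=f_*^{-1}\Gamma|_{W_0}$, which is effective and has the correct numerical class; conversely, for $s$ slightly above $t$ one can exhibit an ample representative $\Delta=A/m$.

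Finally I would compute $t$ by pushing the problem down to $X$. Using $p^*\overline{\mbox{NE}}(X)=\overline{\mbox{NE}}(W)\cap p^*\mbox{N}^1(X)$, effectivity of $s\,p^*L-K_W$ on $W$ is equivalent to nefness of $\alpha:=(3s-3)f_1+(6s-3)f_2+6s\,\delta$ on $X$. The circular nef cone of $X$ is cut out by $\alpha^2\geq 0$ together with $x+y+z\geq 0$, and $\alpha^2$ is a positive multiple of $8s^2-7s+1$, whose larger root is $(7+\sqrt{17})/16$; at this value the linear condition $15s-6\geq 0$ is automatic. Hence $t=(7+\sqrt{17})/16$ and $\mbox{val}_{W_0}(K^-_{Y/C})=-(23+\sqrt{17})/16$, which is irrational. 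I expect the main obstacle will be the reduction step itself: showing that every boundary $\Gamma$ contributes the same $\mbox{ord}_{W_0}$ as the associated $C_{\Delta_\Gamma}$, so that the bound $\mbox{val}_{W_0}(K^-_{Y/C})\geq -(1+t)$ is actually sharp and no exotic boundary can lower $s$ below the cone-theoretic threshold.
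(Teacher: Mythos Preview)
Your proposal is correct and follows essentially the same approach as the paper: the same projective-bundle description of $Y$, the same parametrization of boundaries via $\Delta\equiv s\,p^*L-K_W$, the same formula $\mbox{ord}_{W_0}K_{Y/C}^{\Gamma}=-(s+1)$, the same restriction trick $\Delta_\Gamma=f_*^{-1}\Gamma|_{W_0}$ to show the bound is sharp, and the same nef-cone computation on $X$ yielding $t=(7+\sqrt{17})/16$. The obstacle you flag is exactly the point the paper isolates as a separate Claim, and your proposed device for it coincides with the paper's.
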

\vskip .3cm
Using the result of Theorem \ref{irrval}, we now give an example of an irrational jumping number.
The following are the definitions of multiplier ideal sheaf and jumping numbers in the sense of \cite{MR2501423}.

\begin{de} As in Definition \ref{pair}, let $(X, Z)$ be an effective pair. The multiplier ideal sheaf of $(X,Z)$, denoted by $\I(X,Z)$, is the unique maximal element of $\{\I_m(X, Z)\}_{m \geq 1}$, where 
$$\I_m(X, Z):=f_{m_*}\OO_{Y_m}(\left\lceil K_{m,Y_m/X} - f_m^{-1}(Z)\right\rceil),$$ 
with $f_m:Y_m \to X$ a log resolution of the pair $(X, Z + \OO_X(-mK_X))$.
\end{de}

\begin{de} A number $\mu \in \R_{>0}$ is a \emph{jumping number} of an effective pair $(X, Z)$ if $\I(X, \lambda\cdot Z)\neq \I(X, \mu \cdot Z)$ for all $0 \leq \lambda < \mu$.
\end{de}
A relevant feature of the jumping numbers in the $\Q$-Gorenstein case is that they are always rational. 

\begin{teo} \label{irrjump} With the same construction as in Theorem \ref{irrval}, there are irrational jumping numbers for the pair $(C, P)$, where $P$ is the vertex of the projective cone.
\end{teo}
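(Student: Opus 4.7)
The plan is to convert the irrational valuation $\mbox{val}_{W_0}(K^-_{Y/C}) = -(1+t)$ from Theorem \ref{irrval}, with $t = (7+\sqrt{17})/16$, into an irrational jumping number of the pair $(C, P)$. The entire computation takes place on the blow-up $f \colon Y \to C$ at the vertex: since $\I_P \cdot \OO_Y = \OO_Y(-W_0)$, we have $f^{-1}(P) = W_0$ and $\mbox{val}_{W_0}(P) = 1$.

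For any integer $n \geq 2$ I set $\lambda_n := n - 1 - t$, which is positive and irrational; the goal is to show $\lambda_n$ is a jumping number of $(C, P)$. For each $m \geq 2$ pick a log resolution $f_m \colon Y_m \to C$ of $(C, P + \OO_C(-m K_C))$ factoring through $f$, so that $W_0$ remains a prime divisor on $Y_m$ with
$$\mbox{ord}_{W_0}(K_{m, Y_m/C}) = -(1 + s_m),$$
where $s_m \in \Q$ is the rational number produced by the $m$-compatible boundary (Theorem \ref{teoTC}) in the proof of Theorem \ref{irrval}; by that argument $s_m > t$ strictly, and $s_m$ can be driven arbitrarily close to $t$ from above by suitable divisibility of $m$. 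Consequently the $W_0$-coefficient of $\lceil K_{m, Y_m/C} - \lambda f_m^{-1}(P) \rceil$ equals $\lceil -(1+s_m) - \lambda \rceil$.

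The key calculation is the following: at $\lambda = \lambda_n$ this equals $\lceil -n - (s_m - t) \rceil = -n$ for every $m$ with $0 < s_m - t < 1$, while for any $\lambda \in (\lambda_n - \epsilon, \lambda_n)$ one picks $m$ with $0 < s_m - t < \lambda_n - \lambda$ and obtains $\lceil -(1+s_m) - \lambda \rceil = -n + 1$. Passing to the maximal element $\I(C, \lambda P) = \max_m \I_m(C, \lambda P)$, the $W_0$-coefficient of the divisor representing $\I$ equals $-n$ at $\lambda_n$ and $-n + 1$ for $\lambda$ just below $\lambda_n$; I would then translate this into the strict inclusion $\I(C, \lambda_n P) \subsetneq \I(C, \lambda P)$.

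The main obstacle is confirming that the drop in the $W_0$-coefficient produces a genuine drop in the multiplier ideal on $C$ rather than being absorbed in the push-forward. Every prime divisor $F$ on $Y_m$ with $f_m(F) = P$ lies over $W_0$, so that if $g \colon Y_m \to Y$ denotes the factoring map then $g^* W_0$ has coefficient $c_F = \mbox{val}_F(P) > 0$ at $F$; a direct computation of $\mbox{val}_F(K^-_{Y_m/C}) = -(1+t)c_F + a_F$ with $a_F \in \Z_{\geq 0}$ shows that the $F$-coefficient of $\lceil K_{m, Y_m/C} - \lambda f_m^{-1}(P) \rceil$ also drops by one as $\lambda$ crosses $\lambda_n$. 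A generic function $h \in \OO_{C,P}$ with multiplicity exactly $n - 1$ at $P$ then satisfies the $W_0$-constraint for $\lambda < \lambda_n$ but fails it at $\lambda_n$, witnessing the strict inclusion $\I(C, \lambda_n P) \subsetneq \I(C, \lambda P)$. This identifies $\lambda_n = n - 1 - (7 + \sqrt{17})/16$ as an irrational jumping number of $(C, P)$.
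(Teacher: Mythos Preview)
Your target jumping numbers $\lambda_n = n-1-t$ and the ceiling computation along $W_0$ match the paper exactly, so the strategy is right.  The difference is that the paper never leaves the blow-up $Y$: Remark~\ref{ampleb} says that for each $m$ one may pick the $m$-compatible boundary of the form $\Gamma_m=C_{\Delta_m}$ with $\Delta_m=A/m$ a general smooth very ample divisor on $W$, and then $f\colon Y\to C$ is already a log resolution of $((C,\Gamma_m);P)$.  Since $K^{\Gamma_m}_{Y/C}=-(s_m+1)W_0$ is supported only on $W_0$, one gets directly
\[
\I_m(C,\lambda P)=\I((C,\Gamma_m);\lambda P)=f_*\OO_Y\bigl(\lceil -(s_m+1+\lambda)\rceil W_0\bigr),
\]
and the union over $m$ is $f_*\OO_Y\bigl(\lceil -(t+1+\lambda)\rceil W_0\bigr)$ whenever $t+1+\lambda\notin\Z$, with the jump visibly at $\lambda=\lambda_n$.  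No higher model $Y_m$ is needed, and no divisor other than $W_0$ ever enters.

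Because you pass to $Y_m$, you must control every exceptional $F$ over $P$, and this is where your argument has a real gap.  In the last paragraph you say that a generic $h$ of multiplicity $n-1$ ``satisfies the $W_0$-constraint for $\lambda<\lambda_n$'' and therefore witnesses $h\in\I(C,\lambda P)$.  But the $W_0$-condition is only one of the inequalities defining $\I_m(C,\lambda P)$ on $Y_m$; satisfying it does not put $h$ in the ideal.  Your earlier remark that the $F$-coefficient ``also drops by one'' does not help here either: even granting the formula $\mbox{ord}_F(K_{m,Y_m/C})=a_F-(1+s_m)c_F$ (which itself needs the identity $K_{m,Y_m/C}=K_{Y_m/Y}+g^*K_{m,Y/C}$, hence that $\OO_C(-mK_C)\cdot\OO_Y$ is already invertible), for $c_F>1$ the ceiling $\lceil -(1+s_m+\lambda)c_F\rceil$ is in general strictly below $-(n-1)c_F$ when $\lambda$ is just under $\lambda_n$, so a function of multiplicity $n-1$ need not meet the $F$-condition.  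The clean fix is exactly the paper's: invoke Remark~\ref{ampleb}, compute on $Y$, and avoid the auxiliary divisors altogether.
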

\begin{proof} We are considering $Z = P \subset C$ the vertex of the projective cone. Let us denote by $\mbox{Bl}_PC := f: Y \to C$ the blow-up of the vertex. Then we have that $f^{-1}(k\cdot Z)= k \cdot W_0$. By Theorem \ref{teoTC}, for every $m\geq 1$, there exists an $m$-compatible bounday $\Gamma_m$ such that $K_{m,Y/X}=K^{\Gamma_m}_{Y/X}$ and in particular $\I_m(X,Z)= \I((X,\Gamma_m);Z)$, hence $$\I(X, k\cdot Z) = \bigcup_m \I_m(X,k \cdot Z)=\bigcup_{\Gamma_m}\I((X,\Gamma_m);k \cdot Z).$$
Also, because of Remark \ref{ampleb}, the blow up is a log resolution of $((X, \Gamma_m); Z)$ for every $m\geq 1$, so that 
$$ \I((X,\Gamma_m);k \cdot Z) =f_*\OO_Y \left(\left\lceil K^{\Gamma_m}_{Y/X} - k \cdot W_0 \right\rceil\right)$$ 
and we conclude that we can compute the jumping numbers just considering the log resolution given by the blow-up $Y\to C$. We have
$$\I(X, k\cdot Z) = \bigcup_{\Gamma_m} f_*\OO_Y \left(\left\lceil K^{\Gamma_m}_{Y/X} - k \cdot W_0 \right\rceil\right).$$

Since $\mbox{val}_{W_0}(K^-_{Y/X})= -\frac{23 + \sqrt{17}}{16}$, the jumping numbers are of the form $k= t - \frac{23 + \sqrt{17}}{16}$ with $t$ any integer $\geq 1$.
\end{proof}

\section{Canonical singularities}

We begin by giving an example of a canonical singularity which is not klt.

Let us consider a construction similar to the one in the previous section. Let $S= \mathbb{P}^1\times \mathcal{E}$, where $\mathcal{E}$ is an elliptic curve. The canonical sheaf is 
$$\omega_S \sim \OO_{\mathbb{P}^1}(-2)\boxtimes \OO_{\mathcal{E}}. $$ 
Let $\mathscr{A}$ be an ample line bundle on $\mathscr{E}$ and let us consider the embedding $S \subseteq \mathbb{P}^n$ given by the very ample divisor $L= \OO_{\mathbb{P}^1}(2)\boxtimes \mathscr{A}^{\otimes 2}$. Let $C \subseteq \mathbb{P}^{n+1}$ the projective cone over $S$. 

\begin{teo} \label{canonklt} With the above construction, the singularity of $C$ at its vertex is canonical but not klt.
\end{teo}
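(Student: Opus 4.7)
The plan is to mirror the strategy of Theorem~\ref{irrval}. Take $f:Y\to C$ to be the blow-up of the vertex, so that $Y\cong\mathbb{P}(\OO_S\oplus\OO_S(L))$ is the $\mathbb{P}^1$-bundle $\pi:Y\to S$ whose exceptional divisor is the negative section $S_0$; one has $\OO_{S_0}(S_0)=\OO_S(-L)$ and $K_Y\sim\pi^{*}(K_S-L)-2S_0$. Since $S_0$ is the only $f$-exceptional divisor, canonicity reduces to $\mathrm{ord}_{S_0}(K_{Y/C})\geq 0$ while the failure of klt reduces to $\mathrm{ord}_{S_0}(K_{m,Y/C})\leq -1$ for every $m\geq 1$.

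The non-klt statement follows directly from the claim inside the proof of Theorem~\ref{irrval}: for every effective $\Delta$ on $S$ with $K_S+\Delta\equiv sL$ one has $\mathrm{ord}_{S_0}(K_{Y/C}^{C_\Delta})=-(s+1)$, so by Remark~\ref{boundary}
$$
\mathrm{val}_{S_0}(K^-_{Y/C})=-(1+t_0),\qquad t_0=\inf\bigl\{s\geq 0:sL-K_S\in\overline{\mathrm{Eff}}(S)\bigr\}.
$$
Letting $F_1=\{p\}\times\mathcal{E}$ denote a fibre of the first projection $S\to\mathbb{P}^1$, adjunction gives $K_S\sim -2F_1$, so $-K_S\sim 2F_1$ is already effective and $t_0=0$ (choose $\Delta=2F_1$). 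Thus $\mathrm{val}_{S_0}(K^-_{Y/C})=-1$, and since $K_{m,Y/C}\leq K^-_{Y/C}$ for every $m$, the log discrepancy $a_{m,S_0}(C,0)\leq 0$, so $C$ is not klt at the vertex.

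For canonicity I would apply Lemma~\ref{can1} to the smooth model $Y$ and verify the inclusion $\OO_Y\cdot\OO_C(mK_C)\hookrightarrow\OO_Y(mK_Y)$ for sufficiently divisible $m$. Fix a rational canonical form on $C$ so that $K_C$ and $K_Y$ are represented, respectively, by $-2C_{F_1}$ and $-S_0-2\widetilde{C_{F_1}}$; the second representative is obtained from $-2S_0+\pi^{*}(K_S-L)$ by subtracting the principal divisor of a chosen section of $L$ on $S$. Using the graded coordinate ring of the affine cone, a homogeneous element $\phi$ of degree $j$ lies in $\OO_C(mK_C)$ precisely when $\phi\in H^{0}(S,jL-2mF_1)$, which forces $j\geq m$; on $Y$ one then obtains
$$
\mathrm{div}_Y(\phi)+mK_Y=(j-m)S_0+\pi^{*}Z_\phi\geq 0
$$
with $Z_\phi\geq 0$ effective, establishing the inclusion and hence canonicity by Lemma~\ref{can1}.

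The main obstacle is the bookkeeping of representatives: since $K_C$ is Weil but not $\Q$-Cartier, the valuation $\mathrm{val}_{S_0}(-K_C)$ depends on the chosen Weil representative, and only the pair $(K_C,K_Y)$ coming from a common rational canonical form yields the cancellation above. With that care, the effectivity of $-K_S\sim 2F_1$ on $S$ is precisely what makes $\mathrm{val}_{S_0}(-K_C)=1$ (giving $\mathrm{ord}_{S_0}(K_{Y/C})=0$ and canonicity) and simultaneously forces $t_0=0$ (ruling out klt).
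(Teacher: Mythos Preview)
Your non-klt argument is essentially the paper's: both reduce to the Claim of Theorem~\ref{irrval} and compute the threshold $t_0=\inf\{s:\,sL-K_S\ \text{pseudoeffective}\}=0$, giving $\mathrm{val}_{S_0}(K^-_{Y/C})=-1$.

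For canonicity your route differs from the paper's. The paper does not verify $\OO_Y\cdot\OO_C(mK_C)\hookrightarrow\OO_Y(mK_Y)$ by a graded-ring calculation. Instead it first records a corollary of Lemma~\ref{can1} (Proposition~\ref{positive}: if $Y$ is canonical and $\mathrm{val}_F(K_{Y/X})\ge 0$ for all prime divisors $F$ on $Y$, then $X$ is canonical), and then computes $\mathrm{val}_{S_0}(K_{Y/C})$ by the ``dual'' boundary optimisation, exactly parallel to the non-klt side: one shows
\[
\mathrm{val}_{S_0}(K_{Y/C})=t-1,\qquad t=\inf\{r\in\R:\exists\,\Delta'\ge 0,\ \Delta'\equiv rL+K_S\},
\]
and since $rL+K_S\equiv \OO_{\mathbb{P}^1}(2r-2)\boxtimes\mathscr{A}^{\otimes 2r}$ is effective iff $r\ge 1$, one gets $t=1$ and $\mathrm{val}_{S_0}(K_{Y/C})=0$. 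Your direct computation (homogeneous $\phi$ of degree $j$ lies in $\OO_C(mK_C)$ iff $j\ge m$, hence $\mathrm{div}_Y(\phi)+mK_Y=(j-m)S_0+\pi^*Z_\phi\ge 0$) is correct and amounts to showing $\mathrm{val}_{S_0}(\OO_C(mK_C))=m$, i.e.\ $\mathrm{val}_{S_0}(-K_C)=1$; this is the same numerical content, just obtained on the section-ring side rather than via boundaries. The paper's approach is a bit cleaner because it is perfectly symmetric with the non-klt calculation and avoids the bookkeeping of compatible representatives you flag in your last paragraph (that paragraph is slightly garbled: $\mathrm{val}_{S_0}(-K_C)$ in the sense of Definition~\ref{val} depends only on the linear equivalence class of $K_C$, and your description of how the representative $-S_0-2\widetilde{C_{F_1}}$ arises from ``subtracting the principal divisor of a section of $L$'' is not literally right, though the final representatives on the affine cone are correct). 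Your approach, on the other hand, is more hands-on and makes the role of the grading explicit.
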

\begin{proof}
With the same computation as in Theorem \ref{irrval}, let $f: Y \to C$ be the blow up of the cone at the origin $P$, $\pi: Y \to S$ the natural projection  and let us denote by $S_0$ the negative section. The canonical divisor $K_Y$ is given by $K_Y \sim \pi^*(K_S) -2S_0 +\pi^*(-L)$.
Let us compute $s$ in this case. We have $\Delta \equiv sL - K_S \sim \OO_{\mathbb{P}^1}(2s+2)\boxtimes \mathscr{A}^{\otimes 2s}$. In particular $\Delta$ is effective if and only if $s > 0 $. Hence, we  have:
$$\mbox{val}_{S_0}(K^-_{Y/C})=-1.$$
In particular $C$ is not klt. 

With a similar computation we will show that $C$ has canonical singularities. The relative canonical divisor used to characterize this type of singularities is $K_{Y/X}= K_Y + f^*(-K_X)$ and, by the notion of pullback given in Definition \ref{pullback}, it is given by an approximation of the form:
$$K^+_{m,Y/X}= K_Y + \frac{1}{m}f^{\natural}(-mK_X)$$
where, in this new definition, we have $K^+_{m, Y/X} \geq K^+_{mq,Y/X} \geq K_{Y/X}$. 
In particular the proof of the existence of an $m$-compatible boundary given in \cite{MR2501423} works also in this case with small modifications.

We now introduce the following corollary of Lemma \ref{can1}:
\begin{pro} \label{positive} Let $f:Y\to X$ be a proper birational morphism such that $Y$ is canonical. If ${\rm val}_F(K_{Y/X})\geq 0$ for all divisors $F$ on $Y$, then $X$ is canonical.
\end{pro}
\begin{proof} For all sufficiently divisible $m\geq 1$, $\mbox{val}_F(K^+_{m, Y/X})\geq 0$ (i.e. $mK_Y \geq -f^{\natural}(-mK_X)$), so that:
$$\OO_Y\cdot \OO_X(mK_X) \hookrightarrow (\OO_Y\cdot \OO_X(mK_X))^{\vee \vee} = \OO_Y(-f^{\natural}(-mK_X)) \hookrightarrow \OO_Y(mK_Y).$$
Lemma \ref{can1} now implies the claim.
\end{proof}

Since $K_Y + f^*(-K_C +\Gamma') \geq K_{Y/C}$, as in Remark \ref{boundary}, if we denote by $S_0$ the negative section, we obtain that:
$$\mbox{val}_{S_0} (K_{Y/C})= \inf \{ \mbox{ord}_{S_0}(K_Y + f^*(-K_C +\Gamma')) | (-K_C + \Gamma') \mbox{ is $\R$-Cartier}, \Gamma' \geq 0\},$$
with $\Gamma' \equiv C_{\Delta'}$, where $\Delta' \equiv rL + K_S$.
So, if 
$$t= \inf \{ r \in \R | \exists \Delta' \geq 0, -K_S + \Delta' \equiv  rL\}$$ 
then
$$\mbox{val}_{S_0}(K_{Y/C})= t - 1.$$
As before, we want to control for which values $r$, $\Delta'$ is numerically equivalent to an effective class. In this case $\Delta'\equiv  rL + K_S \sim \OO_{\mathbb{P}^1}(2r-2)\boxtimes \mathscr{A}^{\otimes 2r}$, hence
$$\Delta' \geq 0  \Leftrightarrow \quad r \geq 1 $$ and in particular, $\mbox{val}_{S_0}(K_{Y/C})= 0 $, and so $C$ is canonical.
\end{proof}
\vskip .3cm
Next we will show that, if $X$ is canonical and  $\mathscr{R}_X(K_X)$ is finitely generated, then $X$ has a canonical model with canonical singularities.

Let us introduce an useful Lemma from \cite[Lemma 6.2]{MR1658959}:
\begin{lem} \label{KM}Let $Y$ be a normal algebraic variety and $B$ a Weil divisor on $Y$. The following are equivalent.
\begin{enumerate}
\item $\mathscr{R}_Y(B)$ is a finitely generated sheaf of $\OO_Y$-algebras.
\item There exists a projective birational morphism $\pi: Y^+ \to Y$ such that $Y^+$ is normal, $\mbox{Ex}(\pi)$ has codimension at least $2$, $B'=\pi_*^{-1}B$ is $\Q$-Cartier and $\pi$-ample over $Y$, where $Y^+:=\mbox{Proj}_Y \sum_{m\geq 0}\OO_Y(mB)$.

$\pi: Y^+ \to Y$ is the unique morphism with the above properties.
\end{enumerate}
\end{lem}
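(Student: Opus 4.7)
The plan is to prove the two implications separately and then address uniqueness.

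For $(1) \Rightarrow (2)$, I would set $Y^+ := \mbox{Proj}_Y \mathscr{R}_Y(B)$ with its natural projective structure map $\pi: Y^+ \to Y$, and verify the three required properties in turn. Normality of $Y^+$ follows from the fact that each graded piece $\OO_Y(mB)$ is reflexive (hence integrally closed in the function field of $Y$), so the relative Proj of $\mathscr{R}_Y(B)$ is normal. The smallness of $\pi$ is the key structural point: on the normal variety $Y$, every Weil divisor is Cartier at points of codimension $\leq 1$, so $\mathscr{R}_Y(B)$ is locally a polynomial $\OO_Y$-algebra off a closed set of codimension $\geq 2$, and therefore $\pi$ is an isomorphism over that open set, forcing $\mbox{Ex}(\pi)$ to have codimension $\geq 2$. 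For the $\Q$-Cartier and $\pi$-ample conclusions for $B' = \pi^{-1}_*B$, I would pass to a Veronese subring $\bigoplus_m \OO_Y(mdB)$ which is generated in degree one for some $d > 0$ (using finite generation of $\mathscr{R}_Y(B)$); the resulting tautological line bundle $\OO_{Y^+}(1)$ is $\pi$-very ample, and since it agrees with the reflexive hull of $\pi^{-1}_*(dB)$ outside the small exceptional locus, the two coincide as reflexive sheaves on $Y^+$, so $dB'$ is Cartier and $\pi$-ample.

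For $(2) \Rightarrow (1)$, I start from $\pi: Y^+ \to Y$ with $B'$ being $\Q$-Cartier and $\pi$-ample. Choose $d > 0$ such that $dB'$ is Cartier and $\pi$-ample; then by the relative version of Serre's theorem for projective morphisms, $\bigoplus_{m \geq 0} \pi_* \OO_{Y^+}(mdB')$ is a finitely generated $\OO_Y$-algebra. Because $\pi$ is small and both $Y$ and $Y^+$ are normal, the sheaves $\pi_* \OO_{Y^+}(mB')$ and $\OO_Y(mB)$ are both reflexive on $Y$ and agree on the complement of the codimension-$\geq 2$ set $\pi(\mbox{Ex}(\pi))$, hence they coincide for every $m \geq 0$. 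This identifies $\pi_* \mathscr{R}_{Y^+}(B')$ with $\mathscr{R}_Y(B)$, and finite generation of the Veronese subring plus the finitely many intermediate graded pieces yields finite generation of the whole algebra. Uniqueness is then formal: any $\pi: Y^+ \to Y$ satisfying $(2)$ reconstructs $Y^+ \cong \mbox{Proj}_Y \pi_* \mathscr{R}_{Y^+}(B') = \mbox{Proj}_Y \mathscr{R}_Y(B)$ by relative ampleness, so the morphism is canonical.

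The main obstacle I anticipate is the bookkeeping in the identification $\pi_* \OO_{Y^+}(mB') = \OO_Y(mB)$ for \emph{all} $m$ (not merely those with $mB'$ Cartier), which is where smallness of $\pi$ and reflexivity of both sides must be combined carefully, and the passage from finite generation of a Veronese subring back to finite generation of $\mathscr{R}_Y(B)$ as a graded $\OO_Y$-algebra.
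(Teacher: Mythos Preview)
The paper does not actually prove this lemma: it is stated with an explicit attribution to \cite[Lemma 6.2]{MR1658959} and no argument is given. Your sketch is essentially the standard proof found in that reference---construct $Y^+$ as the relative Proj, use normality of $Y$ to see that $B$ is Cartier in codimension one (hence $\pi$ is small), identify $\OO_{Y^+}(1)$ with a multiple of $B'$ via reflexivity across the small exceptional locus, and run the converse through relative Serre vanishing plus the reflexive identification $\pi_*\OO_{Y^+}(mB')\cong\OO_Y(mB)$---so there is nothing to compare beyond noting that you have supplied what the paper outsources.

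Two small points worth tightening in your write-up: the normality of $Y^+$ is cleanest argued by showing that the graded ring $\mathscr{R}_Y(B)$ is integrally closed (each $\OO_Y(mB)$ is a reflexive, hence normal, sheaf on the normal scheme $Y$), rather than appealing to reflexivity of the pieces alone; and the step ``finite generation of a Veronese subring plus finitely many intermediate pieces implies finite generation of the whole algebra'' requires that each intermediate $\OO_Y(mB)$ be a finitely generated module over the degree-zero part, which holds here because $\OO_Y(mB)$ is coherent. With those clarifications your argument is complete.
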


\begin{pro} \label{fgcr} Let $X$ be a normal quasi-projective variety with canonical singularities whose canonical ring $\mathscr{R}_X(K_X)$ is a finitely generated $\OO_X$-algebra. Then the relative canonical model $X_{can}:=\mbox{Proj}_X(\mathscr{R}_X(K_X))$ exists and it has canonical singularities.
\end{pro}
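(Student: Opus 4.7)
The plan is to apply Lemma \ref{KM} to obtain the canonical model and then reduce the canonicity statement to the criterion in Lemma \ref{can1}. First I would take $B = K_X$ in Lemma \ref{KM}: since $\mathscr{R}_X(K_X)$ is a finitely generated $\OO_X$-algebra, the relative Proj $X_{can} = \mbox{Proj}_X \mathscr{R}_X(K_X)$ exists together with a projective birational morphism $\pi: X_{can} \to X$ such that $X_{can}$ is normal, $\mbox{Ex}(\pi)$ has codimension $\geq 2$, and $K_{X_{can}} = \pi_*^{-1}K_X$ is $\Q$-Cartier and $\pi$-ample. In particular, $X_{can}$ is $\Q$-Gorenstein, so canonicity for $X_{can}$ reduces to the usual $\Q$-Gorenstein notion, and since $\pi$ is small there are no $\pi$-exceptional divisors to worry about.

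Next I would choose $m$ sufficiently divisible that $mK_{X_{can}}$ is Cartier and $\pi$-globally generated. By the standard description of relative Proj associated with the $\pi$-ample sheaf $K_{X_{can}}$, for such $m$ the surjection $\pi^*\pi_*\OO_{X_{can}}(mK_{X_{can}}) \twoheadrightarrow \OO_{X_{can}}(mK_{X_{can}})$ yields the identification
$$\OO_X(mK_X)\cdot \OO_{X_{can}} \;=\; \OO_{X_{can}}(mK_{X_{can}}).$$
Now take a smooth log resolution $g: Y \to X_{can}$ of $(X_{can}, \OO_{X_{can}}(mK_{X_{can}}))$ chosen so that the composition $f = \pi \circ g: Y \to X$ is simultaneously a sufficiently high log resolution of $(X, \OO_X(mK_X))$.

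Since $X$ is canonical, the characterization recalled before Lemma \ref{can1} gives the inclusion
$$\OO_Y\cdot \OO_X(mK_X) \;\hookrightarrow\; \OO_Y(mK_Y).$$
Combining this with the previous identity (and the fact that $mK_{X_{can}}$ is Cartier, so $\OO_Y\cdot \OO_{X_{can}}(mK_{X_{can}}) = \OO_Y(mg^*K_{X_{can}})$) yields
$$\OO_Y\cdot \OO_{X_{can}}(mK_{X_{can}}) \;=\; \OO_Y\cdot \OO_X(mK_X) \;\hookrightarrow\; \OO_Y(mK_Y).$$
By Lemma \ref{can1}, this inclusion forces $X_{can}$ to have canonical singularities.

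The main subtlety is the second step, namely verifying $\OO_X(mK_X)\cdot \OO_{X_{can}} = \OO_{X_{can}}(mK_{X_{can}})$ for suitable $m$; this is what makes the canonicity of $X$ translate faithfully to $X_{can}$, and it rests squarely on the $\pi$-ampleness of $K_{X_{can}}$ coming from Lemma \ref{KM}. Once that identification is in hand, everything else is a direct invocation of the canonicity criterion already established for $X$ together with the reduction to the classical setting provided by the $\Q$-Cartierness of $K_{X_{can}}$.
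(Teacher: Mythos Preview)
Your argument is correct and follows essentially the same route as the paper: both invoke Lemma~\ref{KM} to produce the small model with $\pi$-ample $\Q$-Cartier $K_{X_{can}}$, both use $\pi$-ampleness to identify $\OO_X(mK_X)\cdot\OO_{X_{can}}$ with $\OO_{X_{can}}(mK_{X_{can}})$ for divisible $m$, and both then transport the canonicity inclusion for $X$ along a common resolution. The only cosmetic difference is that the paper phrases the final step divisorially as $K_Y - g^*K_{X_{can}} = K_Y + \tfrac{1}{m}f^{\natural}(-mK_X)\geq 0$ (citing a lemma for the equality $g^*(-mK_{X_{can}})=f^{\natural}(-mK_X)$), whereas you stay in sheaf language and close with Lemma~\ref{can1}; these are the same computation.
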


\begin{proof} Since $X$ is canonical, by \cite[Proposition 8.2]{MR2501423}, we know that for any sufficiently high log resolution $f:Y \to X$, we have $K_Y - \frac{1}{m}f^{\natural}(-mK_X)\geq 0$.\\
By Lemma \ref{KM} there exists a small birational morphism $\pi: X^+ \to X$ such that $K_{X^+}$ is a relatively ample $\Q$-Cartier divisor. Also, for this morphism we have that $\pi^{\natural}(-mK_X)= -mK_{X^+}$. Let us now consider $f: Y\to X$ and $g: Y \to X^+$, a common log resolution of both $X$ and $X^+$. \\
Let us consider the map $\OO_{X^+}\cdot \OO_X(mK_X) \to  \OO_{X^+}(mK_{X^+}) $. Since $\pi_*^{-1}(K_X)= K_{X^+}$ is $\pi$-ample, $\OO_{X^+}(mK_{X^+})$ is globally generated over $X$ for $m$ sufficiently divisible, hence we have an isomorphism of sheaves. Thus
$$K_Y - g^*(K_{X^+})=K_Y + \frac{1}{m}g^*(-mK_{X^+})=K_Y + \frac{1}{m} f^{\natural}(-mK_X) \geq 0$$
 where the last equality holds by \cite[Lemma 2.7]{MR2501423}. Therefore the canonical model $X^+$ has canonical singularities.
\end{proof}

\section{Accumulation points for jumping numbers}
In this last section we use definitions and results from \cite{MR2501423}.

\vskip .3cm
Given an effective pair $(X, Z)$, we want to consider a family of ideal sheaves in the form $$\I_k=\{\I(X, t_k \cdot Z)\}_k$$
for $k\in \N$, $t_k >0$.

If $t_k$ is a decreasing sequence, then $\I_k \subset \I_{k+1}$ and by the Noetherian property, the sequence stabilizes.

If we consider an increasing sequence $t_k$, then $\I_k \supset \I_{k+1}$ and the ascending chain condition does not apply. 
We will show that (under appropriate hypothesis) even in this case the set of ideals stabilizes. Thus there are no accumulation points for the jumping numbers of the pair $(X,Z)$. We will use the following.

\begin{lem} \label{regen} Let $X$ be a projective variety and $I = \{\I_k\}_k$ the family of ideals defined above. If there exists  a line bundle $\mathscr{L}$ on $X$ such that $\mathscr{L}\otimes \I_k$ is globally generated for all $k$, then it is not possible to have an infinite sequence of ideal sheaves $\I_r \subseteq I$ such that $$\OO_X \supseteq \dots \supseteq \I_r \supsetneq  \I_{r+1} \supsetneq \I_{r+2}\supsetneq  \dots \, .$$ 
\end{lem}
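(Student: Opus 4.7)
The plan is to use the global generation hypothesis to encode each ideal $\I_k$ by the finite-dimensional linear datum $V_k:=H^0(X,\mathscr{L}\otimes \I_k)\subseteq H^0(X,\mathscr{L})$, and then invoke finite-dimensionality of global sections on a projective variety to obstruct any infinite strictly descending chain.

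First, I would fix the linear setup. For every $k$, view $\mathscr{L}\otimes \I_k$ as a subsheaf of $\mathscr{L}$ via the inclusion $\I_k\hookrightarrow \OO_X$, and set $V_k:=H^0(X,\mathscr{L}\otimes \I_k)\subseteq H^0(X,\mathscr{L})$. The containment $\I_{k+1}\subseteq \I_k$ induces $V_{k+1}\subseteq V_k$, and the ambient space $H^0(X,\mathscr{L})$ is finite-dimensional since $X$ is projective and $\mathscr{L}$ is coherent.

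Next, I would show that each $\I_k$ is reconstructed from $V_k$. This is exactly where the hypothesis is used: global generation of $\mathscr{L}\otimes \I_k$ means the evaluation map $V_k\otimes_{\mathbb{C}}\OO_X\twoheadrightarrow \mathscr{L}\otimes \I_k$ is surjective, so, composed with $\mathscr{L}\otimes \I_k\hookrightarrow \mathscr{L}$, it identifies $\mathscr{L}\otimes \I_k$ with the subsheaf of $\mathscr{L}$ generated by the image of $V_k$. The same applies to $\I_{k+1}$. Consequently, if $V_k=V_{k+1}$, then $\mathscr{L}\otimes \I_k=\mathscr{L}\otimes \I_{k+1}$ as subsheaves of $\mathscr{L}$; tensoring with $\mathscr{L}^{-1}$ (which is exact, since $\mathscr{L}$ is a line bundle) then gives $\I_k=\I_{k+1}$.

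Finally, I would combine the two steps. If an infinite strictly descending chain $\I_r\supsetneq \I_{r+1}\supsetneq \I_{r+2}\supsetneq \cdots$ existed, the previous paragraph would force the corresponding inclusions of sections to be strict as well, producing an infinite strictly descending chain $V_r\supsetneq V_{r+1}\supsetneq \cdots$ of subspaces of the finite-dimensional vector space $H^0(X,\mathscr{L})$, which is impossible. There is no real obstacle in this argument: the only subtle point is the equivalence between strict inclusions of sheaves and strict inclusions of their spaces of global sections, and this is precisely what the global generation assumption guarantees.
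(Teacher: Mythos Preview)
Your argument is correct and is essentially the same as the paper's: both reduce to the observation that a strict chain $\I_r\supsetneq\I_{r+1}$ forces a strict inequality $h^0(\mathscr{L}\otimes\I_r)>h^0(\mathscr{L}\otimes\I_{r+1})$, which cannot happen infinitely often inside the finite-dimensional space $H^0(X,\mathscr{L})$. Your write-up is simply more explicit about why this strictness holds, spelling out that global generation lets one recover $\I_k$ from $V_k$, which is exactly the point the paper leaves implicit.
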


\begin{proof}
Tensoring by $\mathscr{L}$ and considering cohomology we would have 
$$0 \leq \cdots \lneq h^0(\mathscr{L}\otimes \I_{r+1}) \lneq h^0(\mathscr{L}\otimes \I_{r}) \leq h^0(\mathscr{L}) = n.$$
This is impossible.
\end{proof}

The following is the main result of this section.

\begin{teo} \label{teojn} If $(X, Z)$ is an effective pair with $X$ a projective normal variety such that $X$ has either log terminal or isolated singularities.
Then the set of jumping numbers has no accumulation points, that is, given any sequence $\{t_i\}_{i\in \N}$ such that $t_i >0$ and $\lim_{i\to \infty}t_i= t$, then $$\bigcap_{i} \I(X,t_i\cdot Z) = \I(X, t_{i_0} \cdot Z)$$ for some $i_0>0$.
\end{teo}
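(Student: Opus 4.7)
I apply Lemma \ref{regen}. Passing to a monotone subsequence, the case of a decreasing $\{t_i\}$ yields an ascending chain of ideals that stabilizes by Noetherianity, so I focus on $t_i\nearrow t$: the chain $\I(X,t_1\cdot Z)\supseteq\I(X,t_2\cdot Z)\supseteq\cdots$ is descending, and by Lemma \ref{regen} it suffices to exhibit a single line bundle $\mathscr{L}$ on $X$ such that $\mathscr{L}\otimes\I(X,t_i\cdot Z)$ is globally generated for every $i$.

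\textbf{Log terminal case.} The proof of Proposition \ref{fgcr} yields $m_2$ with $K_{m_2,Y/X}=K^-_{Y/X}$ on a sufficiently high log resolution $f\colon Y\to X$, as a consequence of the finite generation of the canonical ring. The log terminal hypothesis separately provides $m_1$ with $\mathrm{ord}_F(K_{m_1,Y/X})>-1$ for every divisor $F$ over $X$. Choosing $m$ divisible by $\mathrm{lcm}(m_1,m_2)$ and applying Theorem \ref{teoTC} produces an $m$-compatible boundary $\Gamma$ with $K^{\Gamma}_{Y/X}=K_{m,Y/X}=K^-_{Y/X}$; the inequality $\mathrm{ord}_F(K_{m,Y/X})\geq \mathrm{ord}_F(K_{m_1,Y/X})>-1$ ensures $(X,\Gamma)$ is klt in the classical sense. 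The divisibility-monotonicity $\I_{m'}(X,s\cdot Z)\subseteq\I_m(X,s\cdot Z)$ for every $m'$ then collapses the union defining the de Fernex--Hacon multiplier ideal to $\I(X,s\cdot Z)=\I((X,\Gamma);s\cdot Z)$ for every $s\geq 0$. Classical Nadel vanishing for $(X,\Gamma)$ together with Castelnuovo--Mumford regularity produces an $\mathscr{L}$, depending on $t$ but not on $i$, that globally generates $\mathscr{L}\otimes\I(X,s\cdot Z)$ uniformly for $s\in[0,t]$.

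\textbf{Isolated case.} Let $\Sigma=\mathrm{Sing}(X)$, a finite set, and fix $m\geq 2$ together with an $m$-compatible boundary $\Gamma_m$ from Theorem \ref{teoTC}. On the smooth open $X\setminus \Sigma$, $K^{\Gamma_m}_{Y/X}$ reduces to $K_{Y/X}$ (the proper transform of $\Gamma_m$ coincides with its pullback there since no exceptional divisor intervenes), so $\I((X,\Gamma_m); s\cdot Z)|_{X\setminus\Sigma}=\I_m(X,s\cdot Z)|_{X\setminus\Sigma}=\I(X,s\cdot Z)|_{X\setminus\Sigma}$, and the coherent quotient $\I(X,s\cdot Z)/\I((X,\Gamma_m);s\cdot Z)$ is supported on the finite set $\Sigma$. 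Any coherent sheaf supported on a finite set is globally generated (its global sections equal the direct sum of the stalks), hence so is any line-bundle twist of it. Choosing $\mathscr{L}$ sufficiently positive so that $\mathscr{L}-K_X-\Gamma_m-t\cdot Z$ is ample, Nadel vanishing for the log pair $(X,\Gamma_m)$ gives $H^1(X, \mathscr{L}\otimes \I((X,\Gamma_m); s\cdot Z))=0$, and Castelnuovo--Mumford regularity makes $\mathscr{L}\otimes \I((X,\Gamma_m); s\cdot Z)$ globally generated uniformly for $s\in[0,t]$. The short exact sequence
\[
    0\to \mathscr{L}\otimes\I((X,\Gamma_m);s\cdot Z)\to \mathscr{L}\otimes\I(X,s\cdot Z)\to \mathscr{L}\otimes\bigl(\I(X,s\cdot Z)/\I((X,\Gamma_m);s\cdot Z)\bigr)\to 0,
\]
combined with the $H^1$-vanishing of the first term and the global generation of the other two, then yields the global generation of the middle term, and Lemma \ref{regen} concludes.

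\textbf{Main obstacle.} The most delicate point lies in the isolated case: the log pair $(X,\Gamma_m)$ may fail to be klt at the singular points, so one must appeal to the version of Nadel vanishing valid for log pairs with $K_X+\Gamma_m$ merely $\Q$-Cartier. Once this is granted, the concentration of the comparison quotient on the finite set $\Sigma$ permits the lifting of global generation across the short exact sequence.
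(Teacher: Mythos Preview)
Your argument is correct, but it proceeds along a genuinely different route from the paper's.

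The paper does not split into cases. Instead it invokes Proposition~\ref{genweil} (whose proof is where the dichotomy ``log terminal or isolated'' is actually used) to obtain a single ample $\mathscr{A}$ such that $\mathscr{A}^{\otimes m}\otimes\OO_X(-mD)$ is globally generated for \emph{every} $m$, where $D$ is an effective Weil divisor with $K_X-D$ Cartier. This produces, for each $m$, an $m$-compatible boundary $\Delta_m$ with $K_X+\Delta_m$ numerically bounded independently of $m$. Then Theorem~\ref{cotc5.8} and Corollary~\ref{cotc5.9} apply with a uniform line bundle $\mathscr{L}$, and Lemma~\ref{regen} concludes. The point is that although the index $m$ realizing $\I(X,t_i\cdot Z)=\I_m(X,t_i\cdot Z)$ may vary with $i$, the relevant positivity bound on the compatible boundaries does not.

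Your approach avoids Proposition~\ref{genweil} entirely. In the log terminal case you exploit finite generation of $\mathscr{R}_X(K_X)$ to collapse all $\I_m$ to a single $\I_m=\I((X,\Gamma);-)$ with a fixed klt $\Gamma$, reducing to the classical statement. In the isolated case you fix one $\Gamma_m$ and compare $\I((X,\Gamma_m);s\cdot Z)\subseteq\I(X,s\cdot Z)$ via a quotient supported on $\Sigma$, then lift global generation through the short exact sequence using $H^1$-vanishing of the smaller ideal; this is a nice trick specific to zero-dimensional support. Your ``main obstacle'' is not really one: Nadel vanishing for $\I((X,\Delta);Z)$ holds for any log pair with $K_X+\Delta$ $\Q$-Cartier (it follows from Kawamata--Viehweg on a resolution together with local vanishing, and is implicit in the proof of Theorem~\ref{cotc5.8}), so no klt hypothesis on $(X,\Gamma_m)$ is needed. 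One minor point: what you call ``divisibility-monotonicity'' is really the inequality $K_{m',Y/X}\le K^-_{Y/X}=K_{m,Y/X}$ for your particular $m$, not a divisibility relation.

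The trade-off: the paper's argument is uniform and would immediately extend to any $X$ for which Proposition~\ref{genweil} holds (conjecturally all normal projective $X$), whereas your isolated-case argument genuinely relies on the singular locus being finite and does not generalize in that direction.
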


We will need the following results.

\begin{teo} \cite[Corollary 5.8]{MR2501423} \label{cotc5.8} Let $(X, Z)$ be an effective pair, where $X$ is a projective normal variety and $Z= \sum a_k \cdot Z_k$. Let $m\geq 2$ be an integer such that $\I(X, Z)= \I_m(X, Z)$, and let $\Delta$ be an $m$-compatible boundary for $(X,Z)$. For each $k$, let $B_k$ be a Cartier divisor such that $\OO_X(B_k)\otimes \I_{Z_k}$ is globally generated, where $\I_{Z_k}$ is the ideal sheaf of $Z_k$, and suppose that $L$ is a Cartier divisor such that $L- (K_X + \Delta + \sum a_k B_k)$ is nef and big. Then $$H^i(\OO_X(L)\otimes\I(X, Z))=0 \quad \mbox{for} \; i>0.$$
\end{teo}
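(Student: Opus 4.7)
The plan is to reduce this to a standard Kawamata-Viehweg/Nadel vanishing argument on a log resolution, using $m$-compatibility to convert the non-$\Q$-Gorenstein multiplier ideal into the multiplier ideal of a klt pair.

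First I would use the $m$-compatibility of $\Delta$. By Theorem \ref{teoTC}(iv), if $f\colon Y\to X$ is a log resolution of $((X,\Delta);Z+\OO_X(-mK_X))$ making $\Delta$ an $m$-compatible boundary, then $K_{Y/X}^{\Delta}=K_{m,Y/X}$. Hence
\[
\I(X,Z)=\I_m(X,Z)=f_*\OO_Y\bigl(\lceil K_{Y/X}^{\Delta}-\textstyle\sum_k a_k E_k\rceil\bigr),
\]
where $E_k$ is the effective divisor with $\I_{Z_k}\cdot\OO_Y=\OO_Y(-E_k)$. This identifies $\I(X,Z)$ with the usual multiplier ideal of the klt log pair $((X,\Delta);Z)$, so the remaining work is the classical Nadel vanishing statement for klt pairs.

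Next I would transfer the problem to $Y$. Setting $M:=f^*L+\lceil K_{Y/X}^{\Delta}-\sum_k a_k E_k\rceil$, the projection formula gives $f_*\OO_Y(M)=\OO_X(L)\otimes\I(X,Z)$, so by the Leray spectral sequence it suffices to prove
\begin{enumerate}
\item[(a)] $R^jf_*\OO_Y(M)=0$ for $j>0$, and
\item[(b)] $H^i(Y,\OO_Y(M))=0$ for $i>0$.
\end{enumerate}
For both, I want to exhibit $M-K_Y$ as (pullback of a nef and big $\Q$-divisor) $+$ (nef divisor) $+$ (SNC divisor with zero round-down). A direct calculation, using $K_{Y/X}^{\Delta}=K_Y+\Delta_Y-f^*(K_X+\Delta)$, yields
\[
M-K_Y\;=\;f^*\bigl(L-K_X-\Delta-\textstyle\sum_k a_kB_k\bigr)\;+\;\sum_k a_k(f^*B_k-E_k)\;+\;\Delta_Y\;+\;\{\,\textstyle\sum_k a_kE_k+f^*(K_X+\Delta)\,\}.
\]
The first summand is nef and big (pullback of the nef-and-big divisor $L-K_X-\Delta-\sum a_kB_k$ under a birational morphism). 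The second summand is nef: since $\OO_X(B_k)\otimes\I_{Z_k}$ is globally generated, the surjection $\bigoplus\OO_X\twoheadrightarrow\OO_X(B_k)\otimes\I_{Z_k}$ pulls back to a surjection onto $\OO_Y(f^*B_k-E_k)$, so each $f^*B_k-E_k$ is basepoint free. The remaining pieces $\Delta_Y$ and the fractional term have coefficients in $[0,1)$ (using $\lfloor\Delta\rfloor=0$ and the ceiling identity), and their supports lie in the SNC divisor provided by $m$-compatibility (condition (iii) of Theorem \ref{teoTC}).

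With this decomposition in hand, I apply Kawamata-Viehweg vanishing on $Y$ in its log version to obtain (b), and the relative form of Kawamata-Viehweg (or equivalently Koll\'ar's vanishing) to obtain (a). Combining (a) and (b) through Leray yields $H^i(X,\OO_X(L)\otimes\I(X,Z))=0$ for $i>0$. The main obstacle is the bookkeeping in the third step: one must verify carefully that the fractional and integral pieces truly satisfy the SNC/round-down hypotheses of Kawamata-Viehweg, and this is precisely where $m$-compatibility (in particular conditions (i)--(iii) of Theorem \ref{teoTC}) is indispensable, since it guarantees simultaneously that $\lfloor\Delta\rfloor=0$, that no component of $\Delta$ lies in $\mathrm{Supp}\,Z$, and that the relevant divisors together form an SNC configuration on $Y$.
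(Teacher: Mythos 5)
The paper does not prove this statement: it is quoted verbatim as \cite[Corollary 5.8]{MR2501423}, so there is no in-paper proof to compare against. Your overall strategy---use $m$-compatibility to identify $\I(X,Z)$ with the classical multiplier ideal of the klt pair $((X,\Delta);Z)$, push to a log resolution $Y$, and apply Kawamata--Viehweg/Nadel vanishing together with Leray---is the standard argument and is in the spirit of the cited source.

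There is, however, a genuine error in your displayed decomposition of $M-K_Y$: it is not an identity. In the toy case $f=\mathrm{id}$ and $Z=\emptyset$ the left-hand side equals $L-K_X$ while the right-hand side equals $L-K_X+\Delta$, so the two differ by $\Delta$. The source of the discrepancy is a double-count of the strict transform: the fractional-part term $\{\sum_k a_k E_k+f^*(K_X+\Delta)\}$ already carries coefficient $d\in(0,1)$ along each component of $\Delta_Y$ (because $f^*(K_X+\Delta)$ contains $\Delta_Y$ with its boundary coefficient $d$), so adding $\Delta_Y$ separately pushes the proposed SNC boundary up to coefficient $2d$, which can be $\ge 1$ and would defeat the round-down hypothesis of Kawamata--Viehweg. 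The correct bookkeeping keeps the ceiling intact: the coefficient of $\Delta_Y$ inside $\lceil \Delta_Y - f^*(K_X+\Delta) - \sum_k a_k E_k\rceil$ is $\lceil d-d-0\rceil = 0$, where the $0$ uses condition~(ii) of $m$-compatibility (no component of $\Delta$ lies in $\mathrm{Supp}\,Z$, hence no $\Delta_Y$-component appears in any $E_k$). Thus the rounding contributes nothing along $\Delta_Y$, and one obtains
\[
M-K_Y = f^*\bigl(L-K_X-\Delta-\textstyle\sum_k a_kB_k\bigr)+\textstyle\sum_k a_k(f^*B_k-E_k)+B',
\]
where $B' := \Delta_Y + \bigl(\lceil\Theta\rceil-\Theta\bigr)$ with $\Theta=\Delta_Y - f^*(K_X+\Delta) - \sum_k a_k E_k$; the two pieces of $B'$ have disjoint supports (the $\Delta_Y$-components are exactly where $\Theta$ has integer coefficient), so $B'$ has SNC support and coefficients in $[0,1)$, exactly as Kawamata--Viehweg requires. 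With this corrected decomposition the rest of your argument---basepoint-freeness of $f^*B_k-E_k$, relative and absolute vanishing, and Leray---goes through as you describe.
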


\begin{co} \cite[Corollary 5.9]{MR2501423} \label{cotc5.9} With the same notation and assumptions as in Theorem \ref{cotc5.8}, let $A$ be a very ample Cartier divisor on $X$. Then the sheaf $\OO_X(L + kA)\otimes \I(X, Z)$ is globally generated for every integer $k \geq \mbox{dim} X + 1$.
\end{co}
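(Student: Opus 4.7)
The overall idea is to reduce to the case of a strictly increasing sequence $t_i\uparrow t$ (if $t_i$ decreases then $\{\I(X,t_i\cdot Z)\}$ is an ascending chain of ideals, which stabilizes by the Noetherian property of $\OO_X$). For an increasing sequence, $\{\I(X,t_i\cdot Z)\}$ is a descending chain, and by Lemma~\ref{regen} it suffices to produce a single line bundle $\mathscr{L}$ on $X$ such that $\mathscr{L}\otimes\I(X,t_i\cdot Z)$ is globally generated for every $i$.

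To build $\mathscr{L}$ I would invoke Corollary~\ref{cotc5.9}. Fix once and for all a very ample Cartier divisor $A$ and Cartier divisors $B_k$ with $\OO_X(B_k)\otimes\I_{Z_k}$ globally generated. For each $i$, pick an integer $m_i\geq 2$ with $\I(X,t_i\cdot Z)=\I_{m_i}(X,t_i\cdot Z)$ together with an $m_i$-compatible boundary $\Delta_i$ for $(X,t_i\cdot Z)$. The corollary yields global generation of $\OO_X(L_i+(\dim X+1)A)\otimes\I(X,t_i\cdot Z)$ whenever $L_i-\bigl(K_X+\Delta_i+t_i\sum_k a_k B_k\bigr)$ is nef and big. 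Since $t_i<t+1$ the contribution $t_i\sum a_k B_k$ is dominated by the fixed divisor $(t+1)\sum a_k B_k$, so the problem reduces to choosing a single Cartier divisor $L_0$ making $L_0-(K_X+\Delta_i)$ nef and big uniformly in $i$; then $\mathscr{L}=\OO_X(L_0+(\dim X+1)A)$ does the job.

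In the log terminal case this uniform choice is essentially automatic: by Notation~\ref{LTklt} and Theorem~\ref{teoTC} there is a fixed boundary $\Delta$ with $(X,\Delta)$ klt in the classical sense, and by Remark~\ref{fgcr} the ring $\mathscr{R}_X(m_0K_X)$ is generated in degree one for some $m_0$, so that $K^\Delta_{Y/X}=K^-_{Y/X}=K_{m_0,Y/X}$ on a sufficiently high log resolution $f:Y\to X$. Taking each $m_i$ to be a multiple of $m_0$, the same $\Delta$ satisfies conditions (i)--(iv) of Theorem~\ref{teoTC} for every pair $(X,t_i\cdot Z)$ (rescaling $Z$ by $t_i$ affects neither the log-resolution nor the support conditions). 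Hence $K_X+\Delta$ is fixed and a suitable $L_0$ exists by a standard Kodaira-type choice.

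For $X$ with isolated singularities there is no globally fixed boundary making $X$ klt, so the $\Delta_i$ genuinely vary with $i$. The plan is to fix a log resolution $f:Y\to X$ dominating the modifications needed for all the $m_i$, so that $K^-_{Y/X}$ has a fixed finite set of components with $K_{m_i,Y/X}\leq K^-_{Y/X}$ uniformly bounded on them; arranging $K^{\Delta_i}_{Y/X}=K_{m_i,Y/X}$ inside that fixed support, one would then show that the classes $[K_X+\Delta_i]$ in $\mathrm{Pic}(X)\otimes\Q$ stay in a bounded subset. This is where the hypothesis must enter, since the failure of $K_X$ to be $\Q$-Cartier is concentrated at the finitely many points of $\mathrm{Sing}(X)$. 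Establishing this uniform bound is the main obstacle; once it is in place, the same construction as in the klt case produces the single bundle $\mathscr{L}$, and Lemma~\ref{regen} finishes the argument.
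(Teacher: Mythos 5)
Your proposal does not address the statement you were asked to prove. The statement in question is Corollary~\ref{cotc5.9}, a result cited from \cite{MR2501423} that asserts global generation of $\OO_X(L+kA)\otimes\I(X,Z)$ for $k\geq\dim X+1$ under the hypotheses of Theorem~\ref{cotc5.8}. Its proof is a routine application of Castelnuovo--Mumford regularity: for $1\leq i\leq\dim X$ one has $k-i\geq 1$, so $L+(k-i)A-(K_X+\Delta+\sum a_kB_k)$ is the sum of a nef and big divisor and the ample divisor $(k-i)A$, hence nef and big; Theorem~\ref{cotc5.8} then gives $H^i(\OO_X(L+(k-i)A)\otimes\I(X,Z))=0$, and Mumford's criterion yields global generation of $\OO_X(L+kA)\otimes\I(X,Z)$. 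What you wrote instead is a sketch of a proof of Theorem~\ref{teojn}, in which you actually \emph{invoke} Corollary~\ref{cotc5.9} as a black box --- so it cannot serve as a proof of the corollary itself.

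Even read as an attempt at Theorem~\ref{teojn}, your argument is incomplete precisely where the paper's argument does real work. You correctly set up the reduction to an increasing sequence, the appeal to Lemma~\ref{regen}, and the need for a single $\mathscr{L}$ that works uniformly in $i$, but you then announce in the isolated-singularities case that ``establishing this uniform bound is the main obstacle'' without resolving it. The paper resolves it by a different and uniform device (it does not split into klt versus isolated cases at this point): choose $D$ effective with $K_X-D$ Cartier, use Proposition~\ref{genweil} to find an ample $\mathscr{A}$ with $\mathscr{A}^{\otimes m}\otimes\OO_X(-mD)$ globally generated for all $m$, take a general $G\in|\mathscr{A}^{\otimes m}-mD|$, write $G=M+mD$, and set $\Delta_m=\frac1m M$. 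Then $K_X+\Delta_m\equiv(K_X-D)+\mathscr{A}$ has a numerical class independent of $m$, so a single Cartier $L$ makes $L-(K_X+\Delta_m+\sum a_kB_k)$ nef and big for all $m$. That is the uniform bound you were missing; the klt/isolated dichotomy enters only in Proposition~\ref{genweil}, not in the construction of $\mathscr{L}$.
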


\begin{pro} \label{genweil} Let $X$ be a projective normal variety that has either log terminal or isolated singularities. Then, for any divisor $D \in \text{WDiv}_{\Q}(X)$, there exists a very ample divisor $A$ such that $\OO_X(mD)\otimes \OO_X(A)^{\otimes m}$ is globally generated for every $m\geq 1$. 
\end{pro}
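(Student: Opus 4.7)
The plan is to reduce, in each case, to a birational model on which $D$ becomes $\Q$-Cartier, apply standard positivity arguments there, and then push the global generation back down to $X$. A uniform choice of $A$ for all $m\ge 1$ will come from a finite ``residue class'' argument.

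In the klt case, Theorem~\ref{teoTC} supplies a boundary $\Delta$ making $(X,\Delta)$ klt in the classical sense, and Remark~\ref{fgcr} (via \cite[Thm 92]{oai:arXiv.org:0809.2579}) makes $\mathscr{R}_X(D)=\bigoplus_m\OO_X(mD)$ a finitely generated $\OO_X$-algebra. Lemma~\ref{KM} then yields a small birational morphism $\pi:X^+\to X$ with $D^+:=\pi_*^{-1}D$ being $\Q$-Cartier and $\pi$-ample. Choose a very ample $A$ on $X$ large enough that $\pi^*A+D^+$ is ample on $X^+$, and let $r\ge 1$ be an integer with $rD^+$ Cartier. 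After possibly enlarging $A$ by Serre vanishing, we may assume that $\OO_{X^+}(r(D^++\pi^*A))$ is a very ample line bundle and that $\OO_{X^+}(s(D^++\pi^*A))$ is globally generated for every $s\in\{0,\dots,r-1\}$. For any $m=qr+s$, since $rD^+$ and $\pi^*A$ are Cartier,
$$\OO_{X^+}\!\big(m(D^++\pi^*A)\big)=\OO_{X^+}\!\big(r(D^++\pi^*A)\big)^{\otimes q}\otimes\OO_{X^+}\!\big(s(D^++\pi^*A)\big)$$
is a tensor product of globally generated sheaves, hence globally generated. The projection formula together with the smallness of $\pi$ gives $\pi_*\OO_{X^+}(m(D^++\pi^*A))=\OO_X(mD+mA)$, and the global generation descends.

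In the isolated-singularity case $\mathscr{R}_X(D)$ need not be finitely generated (already the cone over an elliptic curve with a non-torsion ruling is a witness, since for a surface any small projective birational morphism is an isomorphism), so the klt argument is unavailable. Instead I would pass to a log resolution $f:Y\to X$, whose exceptional locus is supported over the finite set $\mathrm{Sing}(X)$; the strict transform $\tilde D:=f_*^{-1}D$ is then $\Q$-Cartier because $Y$ is smooth. Pick an effective exceptional $\Q$-divisor $E$ on $Y$ (possibly zero) such that $\tilde D+E$ is $f$-ample --- this is possible since the exceptional fibres are proper over isolated points --- and a very ample $A$ on $X$ large enough that $\tilde D+E+f^*A$ is ample on $Y$. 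The residue-class trick above then yields global generation of $\OO_Y(m(\tilde D+E+f^*A))$ for all $m\ge 1$. Choosing $E$ so that $f_*\OO_Y(m(\tilde D+E))=\OO_X(mD)$ for every $m\ge 1$, relative Serre vanishing on the proper fibres of $f$ over $\mathrm{Sing}(X)$ --- guaranteed for $A$ sufficiently positive --- transfers the global generation along $f_*$ to $\OO_X(mD+mA)$.

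The main technical obstacle is the isolated-singularities case: one must simultaneously arrange that (a) $E$ is chosen compatibly with the identification $f_*\OO_Y(m(\tilde D+E))=\OO_X(mD)$ for every $m\ge 1$, and that (b) $A$ is positive enough so that global generation on $Y$ persists under $f_*$ at the exceptional fibres. In the klt case, the smallness of $\pi$ makes both issues disappear almost formally through the projection formula, which is why that case is more straightforward.
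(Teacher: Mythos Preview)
Your treatment of the klt case is more explicit than the paper's (which simply cites finite generation of $\mathscr{R}_X(D)$ and says the conclusion is easy), and the residue-class argument on $X^+$ is fine. The one step you leave implicit is why global generation of $\OO_{X^+}(m(D^++\pi^*A))$ forces global generation of its pushforward; pushforward is only left exact, so you still need something like $R^1\pi_*\OO_{X^+}=0$ to make the evaluation map stay surjective after applying $\pi_*$. This is available here (klt implies rational singularities), but it should be said.

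The isolated-singularity case, however, has a real gap, and it is exactly the tension you flag at the end. Your two requirements on $E$ pull in opposite directions. To make $\tilde D+E$ $f$-ample one must typically \emph{subtract} an effective exceptional divisor (the relative $\OO(1)$ of a blow-up is $-\mathrm{Exc}$; by the Negativity Lemma any $f$-ample exceptional $\Q$-divisor is anti-effective), whereas to force $f_*\OO_Y(m(\tilde D+E))=\OO_X(mD)$ for \emph{every} $m$ one needs $E$ effective and large enough to absorb all the exceptional poles that arise as $m$ grows. There is in general no single $E$ doing both jobs; the sentence ``this is possible since the exceptional fibres are proper over isolated points'' does not justify it. Likewise, ``relative Serre vanishing \dots\ guaranteed for $A$ sufficiently positive'' cannot help: $f^*A$ is $f$-trivial, so enlarging $A$ has no effect on $R^if_*$.

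The paper avoids this entirely with a different mechanism. It fixes the log resolution $f:Y\to X$, writes $\OO_Y\cdot\OO_X(D)=\OO_Y(\tilde D+F)$, and uses Kawamata--Viehweg vanishing on $Y$ (for $K_Y$ plus a nef and big class built from $m(\tilde D+F)$ and pullbacks of very ample divisors) to kill higher cohomology of a sheaf $\mathcal G$. Leray then gives $H^i(X,f_*\mathcal G)=0$, and Castelnuovo--Mumford regularity makes $f_*\mathcal G$ globally generated for all $m$. The point where ``isolated singularities'' is actually used is the last step: one compares $f_*\mathcal G$ with its reflexive hull $(f_*\mathcal G)^{\vee\vee}\cong\OO_X(mD+(mn+1)B)$ via
\[
0\to f_*\mathcal G\to \OO_X(mD+(mn+1)B)\to Q\to 0,
\]
and since the singularities are isolated, $Q$ is supported on finitely many points and hence globally generated; combined with $H^1(f_*\mathcal G)=0$ this forces the middle term to be globally generated as well. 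This reflexive-hull/skyscraper-cokernel trick is the missing idea; it replaces your attempted simultaneous choice of $E$ by a cohomological argument that is uniform in $m$ from the start.
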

\begin{proof} If $X$ has log terminal singularities, then by Remark \ref{fgcr} $\mathscr{R}_X(D)$ is a finitely generated $\OO_X$-algebra. It is then easy to see that the proposition holds. 

Let us then assume that $X$ has isolated singularities. We may assume $D \in \mbox{WDiv}(X)$. 
Let us fix a log resolution $f:Y \to X$ of $(X,D)$, where $\OO_Y \cdot \OO_X(D)= \OO_Y(\tilde{D} + F)$, with $\tilde{D}= f^{-1}_*D$ and  $F$ an exceptional divisor. Let $B$ be a general very ample divisor on $X$ such that $\OO_X(D+B)$ and $\OO_X(-K_X + B)$ are globally generated, with $\OO_Y\cdot \OO_X(-K_X + B)= \OO_Y(G)$. Then $\tilde{B}=f^*B$ and $\OO_Y(\tilde{B} +m \tilde{D} + mF)$ is globally generated, hence nef and big, for every $m>0$. 
By the Kawamata-Viehweg vanishing, if $\mathcal{G}=\OO_Y(K_Y + m \tilde{B} +m \tilde{D} +mF + G)$, $R^if_*(\mathcal{G})=0$ for all $i>0$, hence $H^i(Y, \mathcal{G})\cong H^i(X, f_* \mathcal{G}) =0$ for all $i>0$. 
Then, by Mumford regularity, we may assume that $\mathcal{F}:= f_*\OO_Y(K_Y + m((n \tilde{B} + \tilde{D} +F) +G))$ is globally generated for all $m>0$. Since $(f_*\mathcal{F})^{\vee \vee} \cong \OO_X(K_X + mD + mnB +B -K_X) \cong \OO_X( mD + (mn + 1)B)$, we have an induced short exact sequence:
$$0 \to f_*\mathcal{F} \to \OO_X(mD + (mn +1)B) \to Q \to 0$$ 
where the quotient $Q$ is suppoted on points and hence globally generated, therefore $mD +(mn +1)B$ is globally generated for all $m$. In particular $mD +m(n +1)B$ is globally generated for every $m$.
\end{proof}

\begin{re} It seems that it is not known if Proposition \ref{genweil} holds for any divisor $D \in \mbox{WDiv}_{\Q}(X)$ on any projective normal variety (regardless of the singularity). We conjecture that this is the case. Note that by Proposition \ref{genweil} this conjecture holds for surfaces.
\end{re}

\vskip .3cm
We can now prove Theorem \ref{teojn}.

\proof[Proof of Theorem \ref{teojn}]  We follow the proof of  \cite[Theorem 5.4]{MR2501423}. Let us consider an effective divisor $D$ such that $K_X -D$ is Cartier. By Proposition \ref{genweil} we know that there exists an ample line bundle $\mathscr{A}$ such that $$\mathscr{A}^{\otimes m}\otimes\mathcal{O}_X(-mD)$$ is globally generated for all $m\geq 0$.\\
For a general element $G$ in the linear system $|\mathscr{A}^{\otimes m} -mD|$, let $G = M + mD$ and we can choose $\Delta_m := \frac{1}{m}M$ as our boundary.\\
Let $B_k$ be Cartier divisors such that $\OO_X(B_k)\otimes \I_{Z_k}$ is globally generated. As in Corollary \ref{cotc5.8}, let $H$ be an ample Cartier divisor such that $H - (K_X - D + \sum a_k \cdot B_k)$ is nef and big. Then the Cartier divisor $(\mathscr{A} + H)$ is such that $$(\mathscr{A} + H) - (K_X + \Delta_m + \sum a_k B_k)$$ is nef and big for all $m$.\\
Let $B$ be a very ample Cartier divisor on $X$. Then for $\mathscr{L}:=\mathcal{O}_X(\mathscr{A} +H + sB)$, with $s > \mbox{dim} X$, we have that $$\mathscr{L}\otimes \mathcal{I}_k(X, Z)$$ is globally generated for all $k$.




By Lemma \ref{regen}, $$\bigcap_{i} \I(X,t_i\cdot Z) = \I(X, t_{i_0} \cdot Z)$$ for some $i_0>0$ and the theorem is proved.

\endproof

\addcontentsline{toc}{chapter}{Bibliography}
\nocite{*}

\bibliographystyle{alpha}  
\bibliography{bibliografia}

\end{document}